\newtheorem{theorem}{Theorem}[section]
\newtheorem{lemma}[theorem]{Lemma}
\newtheorem{corollary}[theorem]{Corollary}
\theoremstyle{definition}
\newtheorem{definition}[theorem]{Definition}
\newtheorem{example}[theorem]{Example}
\newtheorem{remark}[theorem]{Remark}
\newtheorem{question}[theorem]{Question}
\numberwithin{equation}{section}
\newtheorem{proposition}[theorem]{Proposition}
\begin{document}

\title[On Integer Sets Excluding Permutation Pattern Waves]{On Integer Sets Excluding \\ Permutation Pattern Waves}


\author{Kevin Cong}
\address{Harvard University, Cambridge, MA 02138} 
\curraddr{}
\email{kcong@college.harvard.edu}
\thanks{}


\keywords{}

\date{}

\dedicatory{}

\begin{abstract}
We study Ramsey-type problems on sets avoiding sequences whose consecutive differences have a fixed relative order. For a given permutation $\pi \in S_k$, a $\pi$-wave is a sequence $x_1 < \cdots < x_{k+1}$ such that $x_{i+1} - x_i > x_{j+1} - x_j$ if and only if $\pi(i) > \pi(j)$. A subset of $[n] = \{1,\ldots,n\}$ is \emph{$\pi$-wave-free} if it does not contain any $\pi$-wave. Our first main result shows that the size of the largest $\pi$-wave-free subset of $[n]$ is $O\left((\log n)^{k-1}\right)$. We then classify all permutations for which this bound is tight. In the cases where it is not tight, we prove stronger polylogarithmic upper bounds. We then apply these bounds to a closely related coloring problem studied by Landman and Robertson.
\end{abstract}

\maketitle

\section{Introduction}

In this paper, we will study Ramsey-type problems concerning sequences of integers whose consecutive differences have a fixed relative order. Let $\pi \in S_k$ be a permutation. A \emph{$\pi$-wave} is an increasing sequence of integers whose consecutive differences have the same relative order as $\pi$. Formally, we have the following.

\begin{definition}\label{piwave}
    Let $\pi \in S_k$. A \emph{$\pi$-wave} is an increasing sequence of integers $x_1 < \cdots < x_{k+1}$ such that for all $1 \leq i, j \leq k$, we have $x_{i+1} - x_i > x_{j+1} - x_j$ if and only if $\pi(i) > \pi(j)$.
\end{definition}

We then ask the following natural Ramsey-type question: given any permutation $\pi$ and a positive integer $r$, what is the least positive integer $M$ such that one can find a monochromatic $\pi$-wave in any $r$-coloring of $[M] = \{1,\ldots, M\}$? The answer to this question is denoted by $P(\pi, r)$. 

\begin{remark}
    Note that $P(\pi, r)$ always exists by Van der Waerden's Theorem: indeed, for some sufficiently large $M$, any $r$-coloring of $[M]$ contains a monochromatic arithmetic progression of length $k^2$, which always contains a $\pi$-wave for $\pi \in S_k$.
\end{remark}

Permutation waves and the associated question of determining $P(\pi, r)$ were first introduced in 2023 by Landman and Robertson \cite{LR}, who also asked for general asymptotic bounds on $P(\pi, r)$. However, though this question for arbitrary permutation waves is recent, the question for certain special permutations dates back to around 1990, when Brown, Erdős, and Freeman \cite{BEF} and Alon and Spencer \cite{AS} studied \emph{descending waves} ($\pi$-waves for $\pi = k,k-1,\ldots,1$) with $r = 2$ colors. In fact, they considered a variant of such waves, but a simple modification of their arguments yields the following tight bound.\footnote{Precisely, the authors of \cite{AS} and \cite{BEF} worked with what we call \emph{weak-difference descending waves}. A \emph{weak-difference descending wave} of length $k+1$ is a sequence $(x_1, \ldots, x_{k+1})$ such that $x_{i+1} -x_i \geq x_{i+2} - x_{i+1}$ for all $1 \leq i \leq k-1$. They consider the same question presented in this paper but for weak-difference descending waves. See \cite{LR} for an explanation of the easy adaptation to the setting of descending waves.}

\begin{theorem}\label{BEF}\cite{AS, BEF, LR}
    Let $\pi = k, k-1, \ldots, 1$. Then $P(\pi, 2) = \Theta(k^4)$.
\end{theorem}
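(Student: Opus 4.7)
The plan is to prove the upper bound $P(\pi, 2) = O(k^4)$ and the lower bound $P(\pi, 2) = \Omega(k^4)$ separately. For the upper bound, I would first reduce to the density statement that any $S \subseteq [N]$ with $|S| \geq N/2$ contains a descending wave of length $\Omega(N^{1/4})$; setting $N = Ck^4$ for sufficiently large $C$ then forces a monochromatic descending wave of length $\geq k+1$ in the majority color class of any 2-coloring of $[N]$. For the density statement itself, my tool of choice would be an Erd\H{o}s--Szekeres-style labeling: for each $x \in S$, record $(\ell(x), \delta(x))$, where $\ell(x)$ is the length of the longest descending wave in $S$ ending at $x$ and $\delta(x)$ is the smallest possible final difference over all such optimal waves. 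The key extendability observation is that if $x < y$ in $S$ share an $\ell$-value, then $y - x \geq \delta(x)$, since otherwise the optimal wave ending at $x$ could be extended by $y$. Summing these spacing bounds over elements sharing an $\ell$-label, and aggregating across labels, should yield a bound of the form $|S| = O(\ell_{\max}^4)$ in the dense regime.

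For the lower bound, the plan is an explicit construction: partition $[N]$ into contiguous blocks and color them by a pattern that limits monochromatic descending waves. Any monochromatic wave decomposes into a ``between-block'' phase followed by a ``within-block'' phase, since within-block differences are uniformly smaller than between-block differences and so must come later in a strictly decreasing difference sequence. The within-block phase has length $O(\sqrt{\text{block size}})$ by the quadratic growth of descending-wave sums, while the between-block phase is bounded by Erd\H{o}s--Szekeres-type constraints on the block-index gap pattern combined with corrections from within-block positions. Choosing $N = ck^4$ for a small constant $c$ and tuning block sizes and gap structure should keep both contributions $O(k)$.

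The main obstacle I anticipate is twofold. For the upper bound, the Erd\H{o}s--Szekeres argument is more delicate than in the classical monotone-subsequence setting: because the descending condition governs differences rather than the elements themselves, the labels $(\ell, \delta)$ do not linearly order $S$, and deriving a clean $O(\ell_{\max}^4)$ bound requires careful handling of how $\delta$-values can vary among elements sharing an $\ell$-label. For the lower bound, matching the exponent $k^4$ requires care in the construction: a naive alternating uniform-block coloring appears to give only $\Omega(k^3)$ because the constraint $\sum_j m_j g^{(j)} \leq N/(\text{block size})$ interacts poorly with per-phase length bounds, so one needs either non-uniform block sizes or a more refined coloring pattern to push the bound up to $\Omega(k^4)$.
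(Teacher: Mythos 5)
The paper does not include a proof of this theorem --- it is quoted from \cite{AS}, \cite{BEF}, and \cite{LR}. As the footnote explains, \cite{AS} and \cite{BEF} prove the analogous $\Theta(k^3)$ result for \emph{weak-difference} descending waves (non-increasing consecutive differences), and \cite{LR} supplies the adaptation to the strict setting that raises the exponent to $4$. There is therefore no internal proof to compare against; you are attempting a direct, self-contained argument for a cited result.

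Within your proposal the upper-bound step has a concrete gap. The reduction to a density statement is sound in principle, and the extendability observation --- if $x<y$ in $S$ share an $\ell$-value then $y-x \geq \delta(x)$ --- is correct. But the aggregation you describe does not follow: $\delta(x)$ can equal $1$ for every element of a given $\ell$-class, in which case the spacing bounds collapse to the trivial $y-x\geq 1$ and yield nothing. In fact $|S| = O(\ell_{\max}^4)$ is false in general: for $S = \{1,2,4,\ldots,2^{\lfloor\log_2 N\rfloor}\}$ one has $\ell_{\max}=2$ (any three powers of two have strictly increasing gaps), yet $|S|=\Theta(\log N)$ is unbounded. Your hedge ``in the dense regime'' correctly diagnoses the issue, but the labeling as set up never actually uses the density of $S$ in $[N]$; the missing ingredient is a mechanism coupling density to growth of $\ell$ --- for instance, when $|S|\geq N/2$ most consecutive $S$-gaps are $O(1)$, so whenever $\delta(x)$ is large the wave is forced to extend and $\ell$ strictly increases. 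Without such a coupling the argument cannot rule out uniformly tiny $\delta$-values. On the lower bound, you correctly note that the uniform alternating block construction only yields $\Omega(k^3)$ --- this is exactly the weak-difference bound --- and the refinement to $\Omega(k^4)$ is left unresolved. A cleaner route, following the cited sources, is to first prove the weak-difference $\Theta(k^3)$ bound (where the Erd\H{o}s--Szekeres-type counting is more tractable) and then carry out the strict-to-weak adaptation described in \cite{LR}.
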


For a generalization of the above result to multiple colors, see \cite{LsR}. Other variants of descending waves and the corresponding Ramsey-type question have also been studied; for instance, see \cite{BBCY, CDRX}. 

\medskip

In 1995, Bollobás, Erdős, and Jin \cite{BEJ} considered the descending wave question from a different perspective. In particular, in contrast with \cite{AS} and \cite{BEF}, they fix the length of the descending permutation $\pi$ and vary the number of colors, $r$. They showed the following tight asymptotic on $P(\pi, r)$. Note that here and in the remainder of the paper, constants suppressed by big-$O$ notation are allowed to depend on $\pi$. 

\begin{theorem}\label{BEJ}\cite{BEJ}
     Let $\pi = k, k-1, \ldots, 1$. Then for all sufficiently large $r$, $$\frac{1}{2^{k!}} \cdot r(\log_2 r)^{k-1} < P(\pi, r) < \frac{1+o(1)}{k!}\cdot (r \log_2 r)^{k-1}.$$
\end{theorem}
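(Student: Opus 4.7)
The plan is to prove the upper and lower bounds via separate techniques: an extremal-combinatorics plus pigeonhole argument for the upper bound, and an explicit coloring construction for the lower bound.

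For the upper bound, I would first prove a sharp estimate on the maximum size $f(N,k)$ of a $\pi$-wave-free subset of $[N]$. Writing such a subset as $\{s_1 < s_2 < \cdots < s_n\}$ with consecutive gaps $d_i = s_{i+1} - s_i$, the wave-freeness condition forbids any strictly decreasing subsequence of length $k$ among the $d_i$. For $k = 2$ the condition $s_b - s_a \leq s_c - s_b$ (for all $a < b < c$) forces $d_i \geq d_1 + \cdots + d_{i-1}$, hence the $d_i$ grow geometrically and $f(N, 2) = \log_2 N + O(1)$. For general $k$, an iterated Erd\H{o}s--Szekeres-type decomposition gives $f(N, k) = O\bigl((\log_2 N)^{k-1}\bigr)$. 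Simple pigeonhole on the color classes of an $r$-coloring of $[M]$ then yields a Ramsey bound of the polynomial form $P(\pi, r) = O\bigl(r(\log_2 r)^{k-1}\bigr)$.

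To sharpen this to the claimed constant $(1+o(1))/k!$, one cannot merely pigeonhole on sizes of color classes. The key idea is to exploit that the $r$ color classes must jointly tile $[M]$, and they cannot all simultaneously attain the maximum wave-free size. A refined double-counting argument, tracking for each color class not only its size but also the ``profile'' of dyadic scales realized by its gaps, extracts the additional factor of $k$ in the denominator, yielding the upper bound $(1+o(1))(r \log_2 r)^{k-1}/k!$ for large $r$.

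For the lower bound, I would give an explicit $r$-coloring of $[M]$ with $M$ slightly below $r(\log_2 r)^{k-1}/2^{k!}$ that avoids any monochromatic $\pi$-wave. A natural template represents each $x \in [M]$ in a mixed-radix encoding based on the dyadic scales arising in its structure, then defines the color of $x$ as a carefully chosen functional of its digits. Each color class ends up as a Cantor-like geometric set whose gap pattern never realizes $k$ strictly decreasing consecutive dyadic scales, which would be required to form a $\pi$-wave. The constant $1/2^{k!}$ in the denominator reflects the $k!$ different scale orderings that the construction must separately obstruct.

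The main technical obstacle is the refined upper bound. While the extremal estimate $f(N, k) = O((\log_2 N)^{k-1})$ is accessible via Erd\H{o}s--Szekeres, extracting the sharp constant $1/k!$ (rather than the looser constant obtained from naive pigeonhole) requires carefully tracking dyadic-scale profiles of multiple color classes simultaneously and verifying that the joint tiling constraint enforces the extra factor of $k$. I expect this step to consume the bulk of the proof's effort.
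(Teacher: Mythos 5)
This statement is a result of Bollob\'as, Erd\H{o}s, and Jin that the paper cites rather than proves, so there is no internal proof to compare your sketch against. Evaluating the sketch on its own terms, there are real gaps.

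The first concern is the claimed characterization of $\pi$-wave-freeness: you assert that a $\pi$-wave-free set $\{s_1 < \cdots < s_n\}$ has no strictly decreasing length-$k$ subsequence among the consecutive gaps $d_i = s_{i+1} - s_i$. This is correct for $k = 2$ (where the condition forces $d_i \geq d_1 + \cdots + d_{i-1}$), but fails for $k \geq 3$. A $\pi$-wave uses \emph{arbitrary} elements of $S$, so the consecutive differences in the wave are sums of blocks of $d_i$'s rather than individual $d_i$'s. Given a decreasing subsequence $d_{i_1} > d_{i_2} > \cdots > d_{i_k}$, the natural candidate wave $(s_{i_1}, s_{i_2}, \ldots, s_{i_k}, s_{i_k+1})$ has gaps $\sum_{j=i_1}^{i_2-1} d_j, \ldots, \sum_{j=i_{k-1}}^{i_k-1} d_j, d_{i_k}$, and there is no reason these block sums should be decreasing. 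Consequently the iterated Erd\H{o}s--Szekeres decomposition you propose does not apply directly; the paper's own upper bound (\Cref{main}) circumvents this by pigeonholing on dyadic scales of consecutive gaps and then inserting a single small gap, a genuinely different mechanism.

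The second concern is that the two most substantive parts of the theorem---extracting the sharp constant $1/k!$ in the upper bound and constructing a coloring achieving $\Omega(r(\log r)^{k-1})$ in the lower bound---are only described at the level of intent (``track dyadic-scale profiles,'' ``Cantor-like geometric set in mixed radix''). Neither is carried out far enough to verify correctness, and the lower-bound constant $1/2^{k!}$ is attributed to a heuristic about ``$k!$ scale orderings'' that does not correspond to an actual argument. The paper's own approach to lower bounds of this type (\Cref{ezconst}, \Cref{decconst}) is a recursive blow-up of colorings, which is both more concrete and more robust than what you describe. As written, the proposal is a plan rather than a proof, and the two places where it is vague are precisely the two places where the real work lies.
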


In our work, we also study the following variant of the question of Landman and Robertson. Instead of considering a full coloring of $[P(\pi, r)]$, we ask: how many of the elements can be of a given color?

\begin{definition}
    Say a set of integers $S$ is \emph{$\pi$-wave-free} if it does not contain any $\pi$-wave. Then for any permutation $\pi \in S_k$ and positive integer $n$, we denote by $g(\pi, n)$ the largest possible cardinality of a $\pi$-wave-free set $S \subseteq [n]$. 
\end{definition}

In addition to the coloring problem, we focus on obtaining asymptotic bounds for $g(\pi, n)$, which we refer to as the \emph{density problem}. Informally, we are interested in the maximum density of a $\pi$-wave-free subset. Note that one can obtain one nontrivial upper bound on this quantity via Szemerédi's theorem: in particular, a sufficiently large subset of $[n]$ must contain an arithmetic progression of length at least $k^2$, which then contains a $\pi$-wave. However, this bound is far larger than the actual value of $g(\pi, n)$. 

Furthermore, note that any upper bound on the value of $g(\pi, n)$ implies an upper bound on the value of $P(\pi, r)$, and any lower bound on $P(\pi, r)$ implies a lower bound on $g(\pi, n)$. 

\begin{lemma}\label{gtoP}
    Let $\pi \in S_k$. 
    \begin{enumerate}
        \item If $g(\pi, n) = O\left((\log n)^t\right)$, then $P(\pi, r) = O\left(r(\log r)^t\right)$. 
        \item If $P(\pi, r) = \Omega\left(r(\log r)^t\right)$, then $g(\pi, n) = \Omega\left((\log r)^t\right)$.
    \end{enumerate}
\end{lemma}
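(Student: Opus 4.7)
My approach for both parts is a direct pigeonhole argument, exploiting the fact that in any $r$-coloring some color class must contain roughly a $1/r$ fraction of the ground set.

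For part (1), I would assume $g(\pi, n) \leq C(\log n)^t$ for all sufficiently large $n$, and consider an arbitrary $r$-coloring of $[N]$, where $N = Ar(\log r)^t$ for a constant $A$ to be chosen. Pigeonhole gives a color class of size at least $N/r = A(\log r)^t$. Since
\[
\log N = \log A + \log r + t \log \log r = O(\log r),
\]
picking $A$ large enough makes $A(\log r)^t$ exceed $C(\log N)^t \geq g(\pi, N)$, which forces a monochromatic $\pi$-wave inside the large color class. Hence $P(\pi, r) \leq N = O(r(\log r)^t)$.

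For part (2), I would argue dually. Suppose $P(\pi, r) \geq Br(\log r)^t$ for all sufficiently large $r$. By the definition of $P(\pi, r)$, there exists an $r$-coloring of $[P(\pi, r) - 1]$ with no monochromatic $\pi$-wave. Averaging, some color class $S$ has at least $(P(\pi, r)-1)/r \geq \tfrac{B}{2}(\log r)^t$ elements for large $r$. Taking $n := P(\pi, r) - 1$, the set $S \subseteq [n]$ is $\pi$-wave-free, so $g(\pi, n) \geq \tfrac{B}{2}(\log r)^t = \Omega((\log r)^t)$, as desired.

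The only point that requires care, and the closest thing to an obstacle, is confirming that the logarithms line up correctly in part (1): we need $(\log N)^t = O((\log r)^t)$ even though $N$ itself carries a $(\log r)^t$ factor. This is immediate from $\log N = O(\log r)$, so the constant $A$ can be chosen to defeat $C$. Nothing more delicate is needed, and both directions should amount to at most a few lines each.
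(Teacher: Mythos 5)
Your proposal is correct and follows essentially the same approach as the paper: both parts are the same pigeonhole argument, with part (1) choosing $N \asymp r(\log r)^t$ and using $\log N = O(\log r)$ to defeat the constant, and part (2) pigeonholing on the extremal wave-free coloring of $[P(\pi,r)-1]$. The only cosmetic difference is that the paper adds the remark that $g(\pi,n)$ and $P(\pi,r)$ are monotone in their arguments to extend the bound from the special values used to all large $n$ (resp.\ $r$); this is worth stating explicitly but does not change the argument.
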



Consequently, we focus on proving upper bounds on $g(\pi, n)$ and lower bounds on $P(\pi, r)$. In each case, our results yield bounds on both quantities. 
\medskip

For the rest of this paper, we will take $n > 1$ and $r>1$ to be integers. Our first main result is a general recursive upper bound on the size of $g(\pi, n)$. Let us first recall that for any sequence of distinct integers $\sigma = x_1, \ldots, x_t$, the \emph{normalization} $\overline{\sigma} \in S_k$ of $\sigma$ is the unique permutation such that $\overline{\sigma}(i) > \overline{\sigma}(j) $ if and only if $ x_i > x_j$. We then have the following.

\begin{theorem}\label{main}
    Let $\pi \in S_k$, $k \geq 2$. Let $\pi' \in S_{k-1}$ be the permutation obtained by removing $1$ from $\pi$ and then normalizing the result. Then $$g(\pi, n) \leq 30 (\log_2 n) \cdot g(\pi', n).$$
\end{theorem}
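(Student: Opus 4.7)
The plan is to find a $\pi$-wave inside $S \subseteq [n]$ whenever $|S| > 30 (\log_2 n) g(\pi', n)$, via a dyadic pigeonhole on the consecutive differences followed by an inductive extension of a $\pi'$-wave.

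\medskip

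Order $S = \{s_1 < s_2 < \cdots < s_N\}$ and let $g_i = s_{i+1} - s_i$ be the consecutive differences. Since each $g_i$ lies in one of the dyadic intervals $[2^h, 2^{h+1})$ with $0 \leq h \leq \lfloor \log_2 n \rfloor$, by pigeonhole there is a scale $h^*$ such that $I := \{i : g_i \in [2^{h^*}, 2^{h^*+1})\}$ satisfies $|I| \geq (N-1)/(\lfloor \log_2 n \rfloor + 1)$. Let $m = \pi^{-1}(1)$. Each $i \in I$ yields a close pair $(s_i, s_{i+1})$ that is a natural candidate to play the role of $(y_m, y_{m+1})$ in a putative $\pi$-wave, since its gap is strictly smaller than most other gaps in $S$.

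\medskip

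I then form $A = \{s_i : i \in I\}$ and extract $T \subseteq A$ greedily: scan in increasing order and accept an element only when it is at distance at least $C \cdot 2^{h^*+1}$ from the previously accepted one, where $C$ is a suitable absolute constant. Since consecutive elements of $A$ are already at least $2^{h^*}$ apart, at most $O(C)$ elements of $A$ are skipped between successive selections, giving $|T| \geq |A|/O(C)$. A careful choice of $C$ together with the hypothesis on $|S|$ ensures $|T| > g(\pi', n)$, so $T$ contains a $\pi'$-wave $z_1 < z_2 < \cdots < z_k$; by the separation built into $T$, its consecutive gaps $e_\ell = z_{\ell+1} - z_\ell$ each satisfy $e_\ell \geq C \cdot 2^{h^*+1}$.

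\medskip

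Write $z_m^+ \in S$ for the immediate successor of $z_m = s_i \in T$, so that $g^* := z_m^+ - z_m \in [2^{h^*}, 2^{h^*+1})$, and consider the augmented sequence
\[
 z_1, \ldots, z_m,\, z_m^+,\, z_{m+1}, \ldots, z_k.
\]
Its $k$ consecutive gaps are $e_1, \ldots, e_{m-1}, g^*, e_m - g^*, e_{m+1}, \ldots, e_{k-1}$. The separation on $T$ guarantees that $g^*$ is strictly the smallest and that $e_m - g^* > 0$, placing the small gap at the correct position $m$ of $\pi$. The main technical obstacle is verifying that substituting $e_m - g^*$ for $e_m$ does not alter its rank among the other gaps (which would produce the wrong permutation pattern); this forces the secondary condition that every pairwise difference among the $e_\ell$'s must exceed $g^*$. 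I expect to secure this by refining the construction of $T$ with a second pigeonhole step, either on a sub-dyadic scale inside $[2^{h^*}, 2^{h^*+1})$ or by forcing $T$ onto a coarse grid of mesh at least $2^{h^*+1}$, both of which lose only constant factors. Tracking these losses carefully through the argument is what produces the explicit constant $30$ in the final bound.
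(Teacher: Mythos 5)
Your overall plan mirrors the paper's proof quite closely: a dyadic pigeonhole on consecutive differences, extraction of a well-separated subset to find a $\pi'$-wave, and then insertion of one extra element at position $\ell=\pi^{-1}(1)$ to create the smallest gap. You also correctly identify the precise obstruction: after inserting $z_m^+$, the gap $e_m$ becomes $e_m - g^*$, and this must not change rank relative to the other $e_j$, which forces $e_m - e_j > g^*$ for all $e_j < e_m$ (and $e_m > 2g^*$). This is exactly the issue the paper addresses.

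However, the proposal stops at the crux. The two fixes you sketch do not, as stated, close the gap. The greedy construction of $T$ only controls \emph{consecutive} spacing in $T$ — it gives $e_\ell \geq C\cdot 2^{h^*+1}$ — but says nothing about the \emph{pairwise differences} $|e_i - e_j|$, which can be arbitrarily small even when every $e_\ell$ is huge. A ``second pigeonhole on a sub-dyadic scale inside $[2^{h^*},2^{h^*+1})$'' refines only the small gaps $g_i$, not the gaps $e_\ell$ of the $\pi'$-wave, so it is irrelevant to the obstruction. And ``forcing $T$ onto a coarse grid of mesh at least $2^{h^*+1}$'' (i.e.\ restricting to a residue class mod $M$ with $M\geq 2^{h^*+1}$) loses a factor of order $M \sim 2^{h^*}$, which is not a constant factor and can be polynomial in $n$; the claim that this ``loses only constant factors'' is incorrect. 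The paper's fix is a two-stage quantization that is genuinely different: first replace each candidate $x$ by $\lfloor x/2^{s-1}\rfloor$ (this shrinks the universe so that a residue restriction is cheap), find a $\pi'$-wave among these integers restricted to a single residue class mod $6$ (losing only a factor $6$), and then lift back. Because the floored values lie in one class mod $6$, the lifted gaps $d_1(i)$ automatically satisfy both $d_1(i) > 5\cdot 2^{s-1}$ and $d_1(i) - d_1(j) > 4\cdot 2^{s-1}$ whenever $d_0(i)>d_0(j)$, which is precisely the pairwise-difference separation you need. Without this ``floor then take a residue class'' step (or an equivalent that also loses only a constant), the argument does not go through.
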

\begin{example}
    Let $\pi = 4312$, so $\pi' = \overline{432} = 321$. Then \Cref{main} implies that $g(4312, n) \leq 30(\log_2n) \cdot g(321, n)$. \Cref{BEJ} then shows that $g(4312, n) = O\left((\log n)^3\right)$. This bound is tight.
\end{example}

A simple inductive argument yields the following corollary, which bounds $g(\pi, n)$ based on the length of $\pi$ alone and generalize \Cref{BEJ}. 

\begin{corollary}\label{maincor}
    Let $\pi \in S_k$. 
    \begin{enumerate}
        \item We have $g(\pi, n)= O\left( (\log n)^{k-1}\right).$ 
        \item We have $P(\pi, r)= O\left(r (\log r)^{k-1}\right).$ 
    \end{enumerate}
\end{corollary}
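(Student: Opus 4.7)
The plan is to prove part (1) by induction on $k$, then deduce part (2) directly from Lemma \ref{gtoP}.

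For the base case of the induction, I would take $k = 1$. Here $\pi$ is the unique permutation in $S_1$, and a $\pi$-wave is simply any increasing pair $x_1 < x_2$ (the condition in Definition \ref{piwave} is vacuous). Hence any $\pi$-wave-free subset of $[n]$ has at most one element, giving $g(\pi, n) \leq 1 = O\bigl((\log n)^{0}\bigr)$, as required.

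For the inductive step, I would assume the bound $g(\sigma, n) = O\bigl((\log n)^{k-2}\bigr)$ holds for every $\sigma \in S_{k-1}$, and then let $\pi \in S_k$ be arbitrary. Let $\pi' \in S_{k-1}$ be the normalization of the sequence obtained by deleting $1$ from $\pi$, as in Theorem \ref{main}. By the inductive hypothesis applied to $\pi'$, we have $g(\pi', n) = O\bigl((\log n)^{k-2}\bigr)$, where the implicit constant is allowed to depend on $\pi'$ (and therefore on $\pi$). Plugging this into Theorem \ref{main} yields
\[
g(\pi, n) \;\leq\; 30 (\log_2 n) \cdot g(\pi', n) \;=\; O\bigl((\log n)^{k-1}\bigr),
\]
completing the induction and establishing part (1).

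Part (2) is then immediate: applying the first clause of Lemma \ref{gtoP} with $t = k-1$ to the bound just proven gives $P(\pi, r) = O\bigl(r(\log r)^{k-1}\bigr)$.

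I do not expect any real obstacle here; the proof is essentially a one-line induction built on top of Theorem \ref{main}, together with the transfer from density bounds to Ramsey bounds provided by Lemma \ref{gtoP}. The only minor point to keep in mind is that the hidden constants depend on $\pi$ (and on the $\pi'$ obtained at each level of the induction), but since $\pi$ is fixed throughout and the induction terminates after $k-1$ steps, this causes no issue.
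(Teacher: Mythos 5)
Your proof is correct and takes essentially the same approach as the paper: induction on $k$ with the inductive step supplied by Theorem~\ref{main}, and part~(2) deduced from Lemma~\ref{gtoP}. (Incidentally, your base case value $g(\pi,n)\leq 1$ for $k=1$ is the accurate one; the paper states $g(\pi,n)=2$ there, a harmless off-by-one that does not affect the bound.)
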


By \Cref{BEJ}, the bound given in \Cref{main} is tight up to a constant factor when $\pi = k,k-1, \ldots,1$. In fact, tightness holds for a greater class of permutations, which we will classify precisely. A \emph{peak} of $\pi$ is an index $i$ such that $\pi(i) > \pi(i-1)$ and $\pi(i) >  \pi(i+1)$. The following bound, a natural adaptation of methods in \cite{BEF}, allows us to prove tightness for permutations when appending $k$ at either end of a permutation in $S_{k-1}$.

\begin{proposition}\label{ezconst}
    Let $\pi \in S_k$ be a permutation beginning with $k$. Let $\pi'$ be the permutation obtained by deleting $k$ from $\pi$. Suppose that $P(\pi', r) =\Omega\left( r(\log r)^s\right)$. Then $P(\pi, r) =\Omega\left(r(\log r)^{s+1}\right).$
\end{proposition}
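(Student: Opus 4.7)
My plan is to take the hypothesized $\pi'$-wave-free $r$-coloring $c' \colon [M] \to [r]$ with $M = P(\pi',r) - 1 = \Omega(r(\log r)^s)$ and lift it to a $\pi$-wave-free $r$-coloring of $[N]$ with $N = \Omega(r(\log r)^{s+1})$, gaining an extra factor of $\log r$. The construction will partition $[N]$ into $L = \Theta(\log r)$ consecutive blocks $B_1,\ldots,B_L$ of length $M$ and color each block using $c'$ composed with a block-dependent permutation $\sigma_j$ of $[r]$ which I will choose carefully.

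For the verification, I consider any hypothetical monochromatic $\pi$-wave $x_1 < x_2 < \cdots < x_{k+1}$ of color $\gamma$. The straightforward case is when $x_2,\ldots,x_{k+1}$ all lie in a common block $B_j$: since $\pi(1) = k$, the permutation $\pi'$ is simply $\pi(2)\pi(3)\cdots\pi(k)$, and hence $x_2,\ldots,x_{k+1}$ is already a $\pi'$-wave in the ordinary sense. Being monochromatic in $B_j$ under the bijective relabeling $\sigma_j \circ c'$, it corresponds to a monochromatic $\pi'$-wave for $c'$ itself, contradicting the $\pi'$-wave-freedom of $c'$. Notably, where $x_1$ lies is immaterial for this case.

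The main obstacle is the multi-block case, in which $x_2,\ldots,x_{k+1}$ is spread across several blocks. Here I will exploit the key structural property $\pi(1) = k$: the first gap $x_2 - x_1$ is strictly the maximum of all gaps in the wave, so $x_{k+1} - x_1 \leq k(x_2 - x_1)$ and every subsequent gap is strictly less than $x_2 - x_1$. Following the natural adaptation of the methods of \cite{BEF} from $2$ to $r$ colors, this first-gap-dominant property tightly constrains how $x_2,\ldots,x_{k+1}$ can be distributed across blocks (for instance, forcing $x_2$ and $x_{k+1}$ to lie in blocks whose separation is dictated by $x_2 - x_1$). By choosing the $\sigma_j$ so that adjacent blocks' color classes are suitably displaced, I will ensure that whenever $x_2,\ldots,x_{k+1}$ legitimately spans multiple blocks, the preimages in $[M]$ cannot simultaneously all lie in a single $c'$-color class, ruling out a multi-block monochromatic configuration. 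Taking $L \asymp \log r$ then yields $N = LM = \Omega(r(\log r)^{s+1})$ and the claimed lower bound $P(\pi,r) = \Omega(r(\log r)^{s+1})$.
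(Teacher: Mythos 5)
Your single-block case is correct, and you have identified the right structural feature: since $\pi(1)=k$, the first gap $x_2-x_1$ strictly dominates the rest. But the entire difficulty of the proposition lies in the multi-block case, and there you offer only a promise---``by choosing the $\sigma_j$ so that adjacent blocks' color classes are suitably displaced''---without exhibiting any such $\sigma_j$ or checking that they work. Block-wise relabelings only control which color classes of different blocks are identified; they do not change the internal structure of $c'$-color classes. Nothing in your setup rules out a monochromatic $\pi$-wave in which $x_2,\dots,x_t$ sit in $B_j$, $x_{t+1},\dots,x_{k+1}$ sit in $B_{j+1}$, and $x_1$ sits somewhere earlier; the constraint $x_2-x_1>\max_{i\ge 2}(x_{i+1}-x_i)$ is compatible with $x_1$ lying in $B_j$ or $B_{j-1}$, so its block is only weakly constrained. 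This is a genuine gap, and I do not see how to close it within a one-shot block construction.

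The paper's proof is structurally different and supplies the missing ingredient. Instead of fixing $r$ and taking $\Theta(\log r)$ blocks, it proves a recursion over the number of colors: writing $\hat P=P-1$, it shows $\hat P(\pi,2r)\ge 2\hat P(\pi,r)+\hat P(\pi',r)$. The $2r$-coloring is a concatenation of three intervals $L,M,R$: both $L$ and $M$ have length $\hat P(\pi,r)$ and are colored by a $\pi$-wave-free (not merely $\pi'$-wave-free) $r$-coloring $\mathcal C_0$, with $M$ receiving a disjoint second palette of $r$ colors, and $R$ has length $\hat P(\pi',r)$ and is colored by a $\pi'$-wave-free $r$-coloring $\mathcal C_0'$. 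A monochromatic $\pi$-wave cannot lie in $M$ (whose palette is unique to $M$ and which carries $\mathcal C_0$), so its color lies in $[r]$ and it misses $M$ entirely; it cannot fit in $L$ or $R$ alone; hence it must cross from $L$ to $R$, giving a gap exceeding $|M|=\hat P(\pi,r)$. Because $\pi(1)=k$ that gap must be $x_2-x_1$, so $x_2>\hat P(\pi,r)$ and $x_2,\dots,x_{k+1}$ all lie in $R$, producing a $\pi'$-wave in $R$---contradiction. Iterating the recursion and using monotonicity of $\hat P(\pi,\cdot)$ gives $P(\pi,r)=\Omega\bigl(r(\log r)^{s+1}\bigr)$. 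The essential feature your proposal is missing is the self-referential use of a $\pi$-wave-free coloring as two of the three building blocks; a product-style construction using only $\pi'$-wave-free blocks does not appear strong enough to force the dominant first gap to span the whole middle interval.
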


Repeatedly using the previous result shows that \Cref{main} is tight for all permutations without peaks.

\begin{corollary}\label{peakseq}
    Let $\pi \in S_k$ be a permutation with no peaks. 
    \begin{enumerate}
        \item We have $g(\pi, n) = \Theta\left((\log n)^{k-1}\right)$.
        \item We have $P(\pi, r) = \Theta\left(r(\log r)^{k-1}\right).$
    \end{enumerate}
    \end{corollary}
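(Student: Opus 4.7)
My plan is to derive both parts by a short induction on $k$ with \Cref{ezconst} as the engine, using \Cref{maincor} for the matching upper bounds and \Cref{gtoP} to transfer the lower bound on $P$ to a lower bound on $g$ at the end.

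The first key observation I would establish is a structural characterization: $\pi \in S_k$ has no peaks if and only if it is strictly decreasing and then strictly increasing (a \emph{valley} permutation), in which case the minimum value $1$ necessarily sits at the bend. From this one reads off that the maximum $k$ must occur at position $1$ or position $k$, since an interior maximum would be larger than both its neighbors and thus form a peak. The second observation is a reversal symmetry: setting $\pi^R(i) = \pi(k+1-i)$, the map $x \mapsto n+1-x$ is a bijection on $[n]$ that reverses the order of consecutive differences and thus sends $\pi$-waves to $\pi^R$-waves, so $g(\pi, n) = g(\pi^R, n)$ and $P(\pi, r) = P(\pi^R, r)$. Since $\pi$ is peak-free iff $\pi^R$ is, I may replace $\pi$ with $\pi^R$ and thus always assume $\pi(1) = k$.

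With these two reductions in hand, the induction is immediate. The base case $k=1$ is trivial: a $\pi$-wave is just an increasing pair, so $g(\pi, n) = 1$ and $P(\pi, r) = r+1$, matching the required $\Theta$-bounds. For the inductive step with $k \geq 2$, assuming $\pi(1) = k$, let $\pi' = \pi(2), \dots, \pi(k) \in S_{k-1}$; deleting the leading maximum from a valley permutation leaves a valley permutation, so $\pi'$ is peak-free. The inductive hypothesis gives $P(\pi', r) = \Omega(r(\log r)^{k-2})$, and \Cref{ezconst} upgrades this to $P(\pi, r) = \Omega(r(\log r)^{k-1})$. Combined with \Cref{maincor}(2) this proves (2), and then \Cref{gtoP}(2) combined with \Cref{maincor}(1) gives (1). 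The only substantive work is the valley characterization and the reversal bijection on waves; once those are verified, everything else is immediate and no explicit estimates need to be carried out.
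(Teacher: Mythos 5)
Your proposal is correct and follows essentially the same route as the paper: induct on $k$, note that a peak-free permutation has its maximum at an end (the paper states this directly; your ``valley'' characterization is the same observation with a bit more detail), use reversal symmetry (\Cref{reverse}) to put $k$ first, feed the inductive hypothesis into \Cref{ezconst}, and close with \Cref{maincor} and \Cref{gtoP}. One small point in your favor: you are more careful than the paper's written proof about which quantity carries the induction. \Cref{ezconst} is a statement about $P$, so the induction really lives on the coloring bound $P(\pi,r)$ and is transferred to $g$ afterward via \Cref{gtoP}(2); the paper's phrasing applies \Cref{ezconst} as if it acted directly on $g(\pi',n)$, which is a minor imprecision that your version avoids. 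This is a matter of exposition rather than a different method, so I would classify your proof as the same approach, cleanly stated.
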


However, the bound of \Cref{main} is not tight in general. In particular, a modification of \Cref{main} yields the following recursive bound, which is stronger in many cases. 

\begin{theorem}\label{stronger}
    Let $\pi \in S_k$ be a permutation such that $1$ and $2$ are not adjacent. Let $\pi'$ be the permutation obtained by removing $1$ and $2$ from $\pi$ and then normalizing. Then $$g(\pi, n) \leq 42 (\log_2 n) \cdot g(\pi', n).$$
\end{theorem}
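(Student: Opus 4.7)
The plan is to sharpen the argument underlying \Cref{main} so that the non-adjacency hypothesis lets us extract two small gaps at a single dyadic scale rather than two. Naively iterating \Cref{main} would cost $(\log n)^2$; saving a full factor of $\log n$ reflects that once $\pi^{-1}(1)$ and $\pi^{-1}(2)$ are separated in $\pi$, a single pigeonhole on scales produces both small gaps simultaneously.

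Suppose for contradiction that $S \subseteq [n]$ is $\pi$-wave-free with $|S| > 42 (\log_2 n)\, g(\pi', n)$. Write $i_1 = \pi^{-1}(1)$ and $i_2 = \pi^{-1}(2)$; after reflecting if necessary, assume $i_1 < i_2$, so the hypothesis gives $i_2 \ge i_1 + 2$. A dyadic pigeonhole on the $|S| - 1$ consecutive differences of $S$ produces a scale $[2^{j^*}, 2^{j^* + 1})$ containing at least $M \gg g(\pi', n)$ of them. Enumerate the corresponding consecutive pairs in order along $S$ as $(a_1, b_1), \ldots, (a_M, b_M)$ with $b_t < a_{t+1}$.

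A further constant-loss pigeonhole splitting $[2^{j^*}, 2^{j^*+1})$ into its lower and upper halves lets us select $t_1 < t_2$ with $b_{t_1} - a_{t_1} < b_{t_2} - a_{t_2}$, both of order $2^{j^*}$. The pairs $(a_{t_1}, b_{t_1})$ and $(a_{t_2}, b_{t_2})$ will play the roles of $(x_{i_1}, x_{i_1+1})$ and $(x_{i_2}, x_{i_2+1})$ in a sought $\pi$-wave. Let $T$ be the set of elements of $S$ at distance more than $C \cdot 2^{j^*}$ (for a suitable absolute constant $C$) from all four endpoints. The buffer absorbs a constant factor into $42$, so $|T \cup \{a_{t_1}, a_{t_2}\}| > g(\pi', n)$, yielding a $\pi'$-wave $W = (y_1, \ldots, y_{k-1})$ in this set. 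Averaging over the many available choices of $(t_1, t_2)$ allows us to ensure $W$ has exactly $i_1 - 1$ elements below $a_{t_1}$, exactly $i_2 - i_1 - 1$ elements strictly between $b_{t_1}$ and $a_{t_2}$, and exactly $k - i_2$ elements above $b_{t_2}$, with $y_{i_1} = a_{t_1}$ and $y_{i_2 - 1} = a_{t_2}$. The non-adjacency $i_2 \ge i_1 + 2$ is essential here: it permits the middle zone to accommodate the intermediate $\pi'$-wave elements without forcing the two pairs to collide.

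Inserting $b_{t_1}$ between $y_{i_1}$ and $y_{i_1 + 1}$, and $b_{t_2}$ between $y_{i_2 - 1}$ and $y_{i_2}$, produces an increasing $(k+1)$-element sequence in $S$. Both inserted gaps are at most $2^{j^* + 1}$, while every gap of $W$ exceeds $C \cdot 2^{j^*}$ by the buffer, so the inserted gaps are the two smallest, sit at positions $i_1$ and $i_2$ in the correct order, and the remaining $k - 2$ gaps inherit their order from $\pi'$---yielding a $\pi$-wave in $S$, a contradiction. The main obstacle is the averaging step that guarantees a $\pi'$-wave of the prescribed shape (with the correct population in each zone and $a_{t_1}, a_{t_2}$ at the right positions); handling this carefully is the reason the constant degrades from $30$ in \Cref{main} to $42$.
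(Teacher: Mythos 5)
The proposal is a genuine gap. Two independent problems make it unsound as written.

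First, the step \emph{``averaging over the many available choices of $(t_1,t_2)$ allows us to ensure $W$ has exactly $i_1-1$ elements below $a_{t_1}$, $\ldots$, with $y_{i_1}=a_{t_1}$ and $y_{i_2-1}=a_{t_2}$''} is not a proof; it is the entire difficulty. You are demanding a $\pi'$-wave passing through two prescribed points at two prescribed coordinate positions, and the existence of such a wave does not follow from a cardinality bound $|T\cup\{a_{t_1},a_{t_2}\}|>g(\pi',n)$, nor from any obvious averaging over the pairs $(t_1,t_2)$. The guarantee $g(\pi',n)$ gives you \emph{some} $\pi'$-wave inside a large set; it says nothing about where its coordinates land. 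This over-constraining is precisely what the paper's proof avoids: it finds the $\pi'$-wave \emph{first}, in a carefully prepared subset, and only then harvests small-gap elements \emph{locally} near the wave entries. The trick making that possible is using the windowed differences $x_{i+3}-x_i$ (rather than $x_{i+1}-x_i$) in the dyadic pigeonhole: every wave entry $z_j=x_{a_j}$ automatically has three nearby elements $x_{a_j+1},x_{a_j+2},x_{a_j+3}$ within $2^s$ of it, so \Cref{profilepick} extracts a gap $<2^{s-1}$ near whichever entry one wishes. No pre-commitment to particular pairs is needed.

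Second, your buffer set $T$ only pushes $W$'s entries away from the four endpoints $a_{t_1},b_{t_1},a_{t_2},b_{t_2}$; it does not bound the internal consecutive differences of $W$ from below. Two elements of $T$ can be adjacent integers, giving $W$ a gap of size $1\ll 2^{j^*}$, in which case the inserted gaps of order $2^{j^*}$ are not the two smallest and $\mathcal{S}$ is not a $\pi$-wave. The paper handles this by first dividing by $2^{s-1}$ and then restricting to a fixed residue class mod $6$, which forces every gap of the recovered wave $\mathcal{S}_1$ to exceed $5\cdot 2^{s-1}$. Without an analogue of that step your construction fails even if the alignment problem in the previous paragraph were somehow solved.

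A smaller point: your claim that the inserted gap at position $i_1$ is smaller than the one at $i_2$ rests on a ``further constant-loss pigeonhole splitting $[2^{j^*},2^{j^*+1})$ into halves.'' This can be made to work but needs care (and costs a constant); the paper gets the ordering $u_2-u_1<2^{s-1}\le v-z_{r-1}$ for free from \Cref{profilepick} combined with the definition of $T_s$, which is cleaner. But the first two issues are the substantive ones.
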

\begin{example}
    Let $\pi = 1423$ and $\pi' = \overline{43} = 21$. Then \Cref{stronger} implies that $g(1423, n) \leq 42 (\log_2 n) \cdot g(21, n)$. \Cref{maincor} then shows that $g(1423, n) = O\left((\log n)^2\right)$. This bound is tight. 
\end{example}
Combining \Cref{peakseq} and \Cref{stronger}, we have a complete classification of all permutations $\pi$ for which \Cref{maincor} is asymptotically tight. 

\begin{theorem}\label{equality}
    Let $\pi \in S_k$. 
    \begin{enumerate}
        \item If $\pi$ has no peaks, then:
        \begin{itemize}
            \item $g(\pi, n) = \Theta\left((\log n)^{k-1}\right)$,
            \item $P(\pi, r) = \Theta\left(r(\log r)^{k-1}\right)$.
        \end{itemize}
        \item If $\pi$ has at least one peak, then 
        \begin{itemize}
            \item $g(\pi, n) = O\left((\log n)^{k-2}\right)$,
            \item $P(\pi, r) = O\left(r(\log r)^{k-2}\right)$.
        \end{itemize}
    \end{enumerate}
    
\end{theorem}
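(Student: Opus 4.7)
Part (1) is precisely \Cref{peakseq}, so I would only need to prove Part (2). By \Cref{gtoP}(1), the upper bound $P(\pi, r) = O(r (\log r)^{k-2})$ follows once the density estimate $g(\pi, n) = O((\log n)^{k-2})$ is established, so I would focus on the latter. The plan is to prove this density bound by induction on $k$, combining \Cref{main} and \Cref{stronger}.

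For the base case $k = 3$, the only permutations with a peak are $132$ and $231$, and in each the values $1$ and $2$ occupy non-adjacent positions. Thus \Cref{stronger} applies with some $\pi' \in S_1$, yielding $g(\pi, n) = O(\log n) = O((\log n)^{k-2})$. For the inductive step, let $\pi \in S_k$ have a peak, with $k \geq 4$. If $1$ and $2$ are not adjacent in $\pi$, I would apply \Cref{stronger} directly to obtain $g(\pi, n) \leq 42(\log_2 n) \cdot g(\pi', n)$ with $\pi' \in S_{k-2}$, and then use \Cref{maincor}(1) to conclude. If $1$ and $2$ are adjacent, I would instead apply \Cref{main}, which gives $g(\pi, n) \leq 30(\log_2 n) \cdot g(\pi', n)$ for $\pi' \in S_{k-1}$, and invoke the inductive hypothesis on $\pi'$, provided I can verify that $\pi'$ still has a peak.

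The main obstacle is this last verification. Let $i$ be a peak of $\pi$ and $j$ the position of $1$. A peak of value $2$ would force both neighbors to equal $1$, so $\pi(i) \geq 3$; in particular, $i \neq j$ and $i$ is not the position of $2$. If $j$ is not adjacent to $i$, the two neighbors of position $i$ in the re-indexed, normalized $\pi'$ are unchanged from $\pi$, so the peak survives. Otherwise $j \in \{i-1, i+1\}$; since $1$ and $2$ are adjacent in $\pi$ while $\pi(i) \neq 2$, the value $2$ must sit at the neighbor of position $j$ distinct from $i$. After deleting position $j$, this $2$ becomes the new neighbor of position $i$ in $\pi'$, and the inequality $2 < \pi(i)$ shows the peak condition still holds. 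This closes the induction and yields Part (2).
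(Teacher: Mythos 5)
Your proposal is correct and follows essentially the same route as the paper: Part (1) is exactly \Cref{peakseq}; Part (2) is proved by induction on $k$ with base cases $132, 231$, splitting on whether $1$ and $2$ are adjacent, using \Cref{stronger} with \Cref{maincor} in the non-adjacent case and \Cref{main} with the inductive hypothesis in the adjacent case, together with the same verification that a peak of $\pi$ survives the deletion of $1$. (The paper's displayed bounds in the two inductive cases read $O((\log n)^{k-1})$, an evident typo for $O((\log n)^{k-2})$; your version has the exponent right.)
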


Lastly, we analyze case (2) of the above theorem and obtain tight results via \Cref{stronger} and a recursive construction for a certain class of permutations. In particular, a \emph{layered permutation} $\pi \in S_k$ is any permutation obtained from taking the descending permutation $k,k-1,\ldots,1$ and reversing some descending runs. A \emph{layer} is precisely a maximal contiguous ascending run, and a \emph{non-final layer} is any layer which is not at the end of the permutation. The \emph{size} of a layer is the number of elements in the layer.

\begin{example}
    The permutation $\pi = 789623451$ is layered. Here, there are four layers: $789$, $6$, $2345$, and $1$, with sizes $3$, $1$, $4$, and $1$, respectively. The first three of these are the non-final layers. 
\end{example}

We then have the following tight bound for all layered permutations.

\begin{theorem}\label{deccor}
    Let $\pi \in S_k$ be a layered permutation. Let $\ell$ be the number of non-final layers of $\pi$ of size at least $2$.
    \begin{enumerate}
        \item We have $g(\pi, n) = \Theta\left((\log n)^{k-\ell-1}\right)$.
        \item We have $P(\pi, r) = \Theta\left(r\log r)^{k-\ell-1}\right).$
    \end{enumerate}
\end{theorem}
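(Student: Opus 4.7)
Part (2) follows from part (1) via \Cref{gtoP}: the upper bound on $g$ gives the upper bound on $P$ via \Cref{gtoP}(1), and the lower bound on $P$ gives the lower bound on $g$ via \Cref{gtoP}(2). I therefore plan to upper bound $g(\pi, n)$ and lower bound $P(\pi, r)$.

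\textbf{Upper bound on $g$.} I induct on $k$ (base case $k = 1$ is trivial), splitting on the sizes $s_m, s_{m-1}$ of the last two layers. If $s_m \geq 2$, \Cref{main} reduces to the layered $\pi'$ with sizes $(s_1, \ldots, s_m - 1)$: here $\ell' = \ell$ and $k' = k - 1$, so the inductive hypothesis gives $g(\pi', n) = O((\log n)^{k - \ell - 2})$, and the factor of $\log n$ yields the target. If $s_m = 1$ and $s_{m-1} \geq 2$, then $1$ sits at position $k$ while $2$ sits at position $k - s_{m-1}$, separated by the interior of layer $m - 1$, so they are not adjacent; \Cref{stronger} then reduces to $\pi^*$ with sizes $(s_1, \ldots, s_{m-2}, s_{m-1} - 1)$, for which $\ell^* = \ell - 1$ and $k^* = k - 2$, again giving the right exponent. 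If $s_m = s_{m-1} = 1$, \Cref{main} removes the final singleton to yield $\pi'$ of sizes $(s_1, \ldots, s_{m-1})$ with $\ell$ unchanged and $k - 1$ elements, as in the first case.

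\textbf{Lower bound on $P$.} The case $\ell = 0$ is \Cref{peakseq}, since a layered permutation has no peaks exactly when every non-final layer has size $1$. For $\ell \geq 1$, I would build $\pi$ layer by layer from the innermost outward: let $\pi_j$ be the layered permutation with sizes $(s_j, \ldots, s_m)$, so $\pi_1 = \pi$ and $\pi_m$ is the identity on $\{1, \ldots, s_m\}$, with base case $P(\pi_m, r) = \Omega(r(\log r)^{s_m - 1})$ given by \Cref{peakseq}. To pass from $\pi_{j+1}$ to $\pi_j$, I must gain $s_j - [s_j \geq 2]$ logarithmic factors: when $s_j = 1$, $\pi_j$ starts with its maximum, so this is \Cref{ezconst} directly; when $s_j \geq 2$, a new prepending lemma is needed, gaining $s_j - 1$ logarithmic factors. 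These gains telescope to the target exponent $(s_m - 1) + \sum_{j < m}(s_j - [s_j \geq 2]) = k - \ell - 1$. The hard step is the prepending lemma for non-singleton first layers: unlike \Cref{ezconst}, whose proof exploits the maximum sitting at position $1$, a first layer of size $s \geq 2$ places the maximum at position $s > 1$, forcing $s$ strictly increasing large differences at the top of any $\pi_j$-wave. I expect the construction underlying \Cref{ezconst} to adapt by imposing $s - 1$ independent scale parameters at the top, leveraging this forced ascending structure to extract the extra $(\log r)^{s-1}$ colorings.
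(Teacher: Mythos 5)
Your upper bound is essentially the paper's own proof of its \Cref{decbound}: the same induction on $k$ with the same three cases (last layer of size $\geq 2$ via \Cref{main}; last two layers both singletons via \Cref{main}; last layer a singleton but second-to-last of size $\geq 2$ via \Cref{stronger}), and the same exponent bookkeeping. That part is correct and matches.

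The lower bound is where there is a genuine gap. Your bookkeeping is right (the telescoping sum does yield $k - \ell - 1$, and the characterization that $\ell = 0$ iff $\pi$ has no peaks is correct), and the cases $\ell = 0$ and $s_j = 1$ are handled by results already in hand. But the central ingredient --- the ``prepending lemma'' that prepends a layer of size $s_j \geq 2$ while gaining $s_j - 1$ logarithmic factors --- is only asserted to be plausible, not proved, and the route you sketch (adapting the $L$-$M$-$R$ additive block construction of \Cref{ezconst}) is not clearly viable. \Cref{ezconst} relies crucially on $\pi$ beginning with the single maximum $k$: a monochromatic wave spanning $L$ and $R$ must cross exactly once, and since the first difference is the largest, the crossing is forced to be the first step, leaving a $\pi'$-wave entirely in $R$. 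With a first layer of size $s \geq 2$, the unique largest difference is at position $s$, so the crossing is forced to be step $s$; but then the left segment $(x_1, \ldots, x_s)$ is an identity wave, not a $\pi$- or $\pi'$-wave, so $L$ would have to be colored against identity waves and the recursion does not close the way \Cref{ezconst} closes. You give no argument resolving this. The paper instead does something structurally different: it introduces weak-difference $\pi$-waves and $P_{\mathrm{weak}}$, observes (\Cref{h}) that \Cref{ezconst} carries over to that setting, and proves a genuinely multiplicative product lemma (\Cref{decconst}) for direct differences $\pi_L \ominus \pi_R$, using a base-$m_R$-style decomposition $x = m_R(a-1) + \tfrac{m_R}{5}(b-1) + c$ and a $5m^2$-coloring by $(\mathcal{C}_L(a), \mathcal{C}_R(c), b)$. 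The move to weak-difference waves is forced by this construction, since the coordinate differences only control the total difference up to carries, so one cannot guarantee strict orderings. Your prepending lemma is then the special case $\pi_L = \mathrm{identity}_{s_j}$ of \Cref{decconst} combined with \Cref{peakseq}, but without the product construction and the weak-difference device the lower bound is not established.
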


The remainder of this paper is structured as follows. In \Cref{maingen}, we prove the general bounds of \Cref{main} and \Cref{maincor}. In \Cref{str}, we obtain stronger recursive bounds and classify all cases where \Cref{maincor} is tight by proving \Cref{stronger} and \Cref{equality}. In \Cref{layered}, we show the tight result of \Cref{deccor} in the case of layered permutations. In \Cref{cors}, we prove \Cref{gtoP}. Finally, in \Cref{concl}, we provide concluding remarks and suggest directions for future work. 

\textbf{Acknowledgements.} This research was conducted while the author was at the 2023 University of Minnesota Duluth REU, supported by Jane Street Capital, the National Security Agency, and the National Science Foundation (Grants 2052036 and 2140043). The author thanks Joe Gallian and Colin Defant for organizing the REU and for the opportunity to take part, as well as proofreading the paper, Noah Kravitz for helpful discussions, and Mitchell Lee and Carl Schildkraut for their comments on earlier drafts of the paper.

\section{General Upper Bounds: Proofs of \Cref{main} and \Cref{maincor}}\label{maingen}
We begin by discussing general bounds on $g(\pi, n)$ which hold for all permutations. First, we give a proof of \Cref{main}. The main idea is to use the Pigeonhole Principle repeatedly to construct a $\pi'$-wave satisfying certain conditions, which we then modify to construct a $\pi$-wave. 

\begin{proof}[Proof of \Cref{main}]
Let $\pi$, $\pi'$ be as in the theorem statement. Let $S \subseteq [n]$ be any set such that $m = |S| \geq 30(\log_2 n) \cdot g(\pi', n)$. We will show that $S$ has a $\pi$-wave. For $n \leq 30$ the statement is vacuously true as $|S| \geq 30$, hence we may assume $n > 30$. Our first goal will be to take a particular subset of $S$ from which we find a suitable $\pi'$-wave. 

Suppose that the elements of $S$ in increasing order are $x_1 < \cdots < x_m$. Partition the elements into  $1 + \lfloor \log_2 n \rfloor$ sets as follows: for each $1 \leq j \leq 1 + \lfloor \log_2 n \rfloor$, set  $$T_j = \{x_i: 1 \leq i \leq m-1, x_{i+1} - x_i \in [2^{j-1}, 2^{j})\}.$$

Note that by definition, $$\bigsqcup_{j = 1}^{1 + \lfloor \log_2 n \rfloor} T_j =S.$$ Therefore, by the Pigeonhole Principle there is some $1 \leq s \leq 1 + \lfloor \log_2 n \rfloor$ such that $$|T_s| \geq \frac{30(\log_2 n) \cdot g(\pi', n)}{ 1 + \lfloor \log_2 n \rfloor}\geq 15g(\pi', n).$$ Suppose $T_s = \{x_{i_1}, \ldots, x_{i_t}\}$ where $t \geq 15 g(\pi', n)$ and $i_1 < \cdots < i_t$. Consider the set 
$$T_s' = \left\{\left\lfloor \frac{x_{i_{2j}}}{2^{s-1}} \right\rfloor: 2 \leq j \leq \left\lfloor \frac{t}{2}\right\rfloor\right\}.$$ Note that $$x_{i_{2j+2}} - x_{i_{2j}} \geq x_{i_{2j} + 1} - x_{i_{2j}} \geq 2^{s-1},$$ hence $T_s'$ is a set of $\lfloor \frac{t}{2} - 1\rfloor$ positive integers. In particular, we have $|T_s'| \geq 6g(\pi', n)$.

Again, partition $T_s'$ into $6$ sets $$T_{s,j} = \{n\in T_s', n \equiv j \pmod 6\}$$ for $1 \leq j \leq 6$. By Pigeonhole again, at least one such set will satisfy $|T_{s,j}| \geq \frac{|T_j'|}{6} \geq g(\pi', n)$.  It follows by the definition of $g(\pi', n)$ that there is some increasing subsequence of elements of $T_{s,j}$ which forms a $\pi'$-wave. Let this sequence be $$\mathcal{S}_0 = \left(\left\lfloor \frac{x_{a_1}}{2^{s-1}} \right\rfloor,\ldots, \left\lfloor \frac{x_{a_k}}{2^{j-1}} \right\rfloor\right) = (y_1, y_2, \ldots, y_k).$$ 

\medskip

Now, our first task is accomplished: the sequence $$\mathcal{S}_1 = \left(x_{a_1}, \ldots, x_{a_k}\right) = (z_1, \ldots, z_k)$$ is a $\pi'$-wave, and moreover, the consecutive differences $d_1(i) = z_{i+1} - z_i$ are all far apart. Indeed, let $d_0(i) = y_{i+1} - y_i$ be the consecutive differences of $\mathcal{S}_0$. Since $\mathcal{S}_0$ is a $\pi'$-wave, we know that if $\pi'(i) > \pi'(j)$, then $d_0(i) > d_0(j)$. Since $x \leq \lfloor x \rfloor < x+1$, we have $$[d_0(i) - 1] \cdot 2^{s-1}  < d_1(i) < [d_0(i) + 1] \cdot 2^{s-1}.$$ Also, by construction, $d_0(i)$ is a multiple of $6$. Therefore, we have $$d_0(i) > 0 \implies d_1(i) > 5 \cdot 2^{s-1}$$ and \begin{align*} d_0(i) > d_0(j) \implies d_1(i) - d_1(j)> 4 \cdot 2^{s-1}.\end{align*} 

\medskip

Now, we construct the desired $\pi$-wave. Let $\ell = \pi^{-1}(1)$ be the index of $1$ in $\pi$. Let $u = x_{a_l+1}$. Recall that since $z_l = x_{a_l} \in T_s$, we know $u - z_l = x_{a_{l} + 1} - x_{a_l} \in [2^{s-1}, 2^s)$. Then consider the sequence $$\mathcal{S} = (z_1, \ldots, z_{l}, u, z_{l+1}, \ldots, z_k).$$ We claim that $\mathcal{S}$ is a $\pi$-wave. 

This is true as $u - z_l$ is the smallest consecutive difference, and inserting $u$ in $\mathcal{S}_1$ does not significantly affect any other consecutive differences. More precisely, suppose that $\pi(i) > \pi(j)$. Let $\mathcal{S}(i)$ be the $i$-th element of $\mathcal{S}$ and let $d(i) = \mathcal{S}(i+1) - \mathcal{S}(i)$ be the consecutive differences of $\mathcal{S}$. It suffices to show that $d(i) > d(j)$. We then have two cases.

\medskip

\noindent\textbf{Case 1: $j = \ell$.} Then if $i \leq \ell$, we have $$d(i) = d_1(i) > 5 \cdot 2^{s-1} > 2^s,$$ and if $i \geq \ell+1$, we have $$d(i) \geq d_1(i-1) - 2^s > 3 \cdot 2^{s-1} > 2^s.$$ For both possibilities, we have $d(i)> d(j)$. 

\medskip

\noindent\textbf{Case 2: $j \neq \ell$.} Because $\pi(i) > \pi(j)$, it follows that $i \neq \ell$ as well. Now, let $i' = i $ if $i \leq \ell$ and $i' = i-1$ otherwise. Similarly, let $j' = j$ if $j \leq \ell$ and $j' =  j-1$ otherwise. This is done so that for $i \neq \ell+1$, we have $$d(i) = z_{i'+1} - z_{i'} = d_1(i'),$$ and for $i = \ell + 1$ we have $$d(i) = z_{\ell+1} - u = d_1(\ell) - (u - z_\ell) > d_1(\ell)  - 2^s = d_1(i') - 2^s.$$ Hence, we have $$d_1(i') - 2^s < d(i) \leq d_1(i').$$ Analogous statements hold for $d(j)$. Therefore, we have that $$d(i)  -d(j) > d_1(i') - d_1(j') - 2^s > 4 \cdot 2^{s-1} - 2^s > 0.$$ Therefore, $d(i) > d(j)$ in this case as well. We conclude that $\mathcal{S}$ is a $\pi$-wave. 
\end{proof}

The recursive nature of the theorem now immediately implies \Cref{maincor}. This thus provides a general upper bound on $g(\pi, n)$ which is polynomial in $\log n$ and depends only on the length of $\pi$. 

\begin{proof}[Proof of \Cref{maincor}] We claim that for any $\pi \in S_k$, $g(\pi, n) \leq 30^k(\log_2 n)^{k-1}$. Indeed, induct on $k$. For $k = 1$ it is immediate that $g(\pi, n) = 2$. Now suppose the statement holds for all permutations $\pi \in S_{k-1}$. Let $\pi \in S_k$ be some permutation of length $k$, and let $\pi'$ be the permutation obtained by removing $1$ from $\pi$ and normalizing. Then $\pi' \in S_{k-1}$. By \Cref{main} and the inductive hypothesis, $g(\pi, n) < 30(\log_2 n) \cdot g(\pi', n)  < 30^k (\log_2 n)^{k-1}$. This completes the induction and the proof of the corollary. 
\end{proof}

We end this section by remarking that the above proof of \Cref{maincor} allows the implied constant to be taken as $100^k$, which is exponential in the length of $\pi$. 

\section{Stronger Bounds and Tightness in the Main Result}\label{str}

We now discuss stronger bounds and classify all cases in which \Cref{maincor} is tight. We will begin by proving \Cref{ezconst}.

\begin{proof}[Proof of \Cref{ezconst}]
    Let $\pi$, $\pi'$ be as in the theorem statement. We will derive a recursive inequality for $P(\pi, r)$. For convenience, let $\hat{P}(\pi, r) = P(\pi, r) - 1$. 
    
    Let $\mathcal{C}_0: [\hat{P}(\pi, r) ] \rightarrow [r]$ be an $r$-coloring of $[\hat{P}(\pi, r)]$ without any monochromatic $\pi$-wave. Let $\mathcal{C}_0': [\hat{P}(\pi', r)] \rightarrow [r]$ be an $r$-coloring of $[\hat{P}(\pi', r) ]$ without any monochromatic $\pi'$-wave. We will combine these colorings to form a $2r$-coloring of $[2\hat{P}(\pi, r) + \hat{P}(\pi', r)]$. 

    Indeed, let the coloring $\mathcal{C}: [2\hat{P}(\pi, r) + \hat{P}(\pi', r)] \rightarrow [2r]$ be such that
  $$
    \mathcal{C}(i) = 
    \begin{cases}
      \mathcal{C}_0(i), & \text{for }\ i \in [1, \hat{P}(\pi, r)] = L \\
      \mathcal{C}_0(i) + r, & \text{for }\ i \in (\hat{P}(\pi, r) , 2\hat{P}(\pi, r) ] = M\\
      \mathcal{C}_0'(i), & \text{for }\ i\in (2\hat{P}(\pi, r), 2\hat{P}(\pi, r) + \hat{P}(\pi', r)] = R.\
    \end{cases}
  $$
We claim that the coloring $\mathcal{C}$ has no monochromatic $\pi$-wave. Indeed, suppose that there were some monochromatic $\pi$-wave $(x_1, \ldots, x_{k+1})$. Let $c = C(x_1)$ be the color of all elements in the wave. By construction there is no monochromatic $\pi$-wave entirely in $M$. Therefore, we must have $1 \leq c \leq r$. Hence for all $i$ we have $x_i \not\in M$.

Again by construction, there is no monochromatic $\pi$-wave in $L$ or $R$. Therefore, we must have $x_1 \in L$ and $x_{k+1} \in R$. Hence, there must be some index $i$ for which $x_i \in L$ and $x_{i+1} \in R$. Since $\pi$ begins with $k$, we must have $x_2 - x_1 > x_{i+1} - x_i > \hat{P}(\pi, r)$. However, this implies that $x_2 \geq \hat{P}(\pi, r)$. Since $x_2 \not\in M$ we must have $x_2 \in R$, and hence $x_i \in R$ for $i \geq 2$. Yet $(x_2, x_3, \ldots, x_{k+1})$ forms a $\pi'$-wave. But there is no $\pi'$-wave in $R$, a contradiction. 

Therefore, we have the recursion $$\hat{P}(\pi, 2r) \geq 2\hat{P}(\pi, r) + \hat{P}(\pi', r). $$ Since $\hat{P}(\pi', r) = \Omega\left(r(\log r)^s\right)$, there are some $C, N_0$ such that for $r > N_0$ we have $\hat{P}(\pi', r) \geq Cr(\log_2 r)^s$. Hence for all $r > N_0$ we have $$\hat{P}(\pi, 2r) \geq 2\hat{P}(\pi, r) + Cr(\log_2 r)^s.$$ Solving the recursive inequality yields $\hat{P}(\pi, r) \geq C'r(\log_2 r)^{s+1}$ for some sufficiently small $C' > 0$ and all sufficiently large $r$ which are powers of $2$. Since $\hat{P}(\pi, r)$ is clearly increasing in $r$, we conclude that $\hat{P}(\pi, r) = \Omega\left(r(\log r)^{s+1}\right)$, completing the proof. 
\end{proof}

We now prove \Cref{peakseq}, therefore establishing that \Cref{maincor} is tight for permutations without peaks. First, let us note the analog of a useful remark in \cite{LR}. 

\begin{remark}[\cite{LR}]\label{reverse}
    Let $\pi$ be any permutation, and $\pi_{\mathrm{rev}}$ be the reverse of $\pi$. Then $P(\pi, r) = P(\pi_\mathrm{rev}, r)$, and $g(\pi, n) = g(\pi_\mathrm{rev}, n)$. 
\end{remark}

This follows immediately from the fact that if $S \subseteq [n]$ contains no $\pi$-wave, then the set $S_\mathrm{rev} = \{n+1-s: s \in S\}$ contains no $\pi_\mathrm{rev}$-wave. We now prove \Cref{peakseq}.

\begin{proof}[Proof of \Cref{peakseq}]
    We show item (1); item (2) then follows from \Cref{gtoP}. The upper bound is a consequence of \Cref{main}, hence it suffices to show the lower bound. Proceed with induction on the length $k$ of $\pi$. For $k = 1$ it is clear. Then suppose the statement is true for all $\pi$ of length $k-1$ and take some permutation $\pi \in S_k$ without peaks. Then note that $k$ must be at either the beginning or the end of $\pi$, as otherwise it would be a peak. By \Cref{reverse}, we can assume that $k$ is at the beginning. Then let $\pi = k,\pi'$ and note that by the inductive hypothesis, $g(\pi', n) = \Omega\left((\log_2n)^{k-2}\right)$. Hence, by \Cref{ezconst}, we have $g(\pi, n) = \Omega(\left((\log_2n)^{k-1}\right)$, completing the induction. 
\end{proof}

We now show \Cref{stronger}, a recursive bound which improves on the bound of \Cref{main} for most permutations. We use the same main idea as in the proof of \Cref{main} with a few modifications. First, we note the following simple fact.

\begin{lemma}\label{profilepick}
    Let $a_1 < a_2 < a_3$ be positive integers. Then there exists $i \in \{1,2\}$ such that $a_{i+1} - a_i \leq \frac{a_3 - a_1}{2}$.
\end{lemma}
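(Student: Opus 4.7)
The plan is to proceed by a direct averaging (pigeonhole) argument on the two consecutive differences. Writing $d_1 = a_2 - a_1$ and $d_2 = a_3 - a_2$, the key observation is simply that these two positive integers telescope to $d_1 + d_2 = a_3 - a_1$.

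From this identity, it is immediate that the smaller of $d_1, d_2$ is at most the average $(a_3 - a_1)/2$. In other words, if both $d_1 > (a_3-a_1)/2$ and $d_2 > (a_3 - a_1)/2$, then their sum would exceed $a_3 - a_1$, contradicting the telescoping identity. Hence at least one index $i \in \{1,2\}$ satisfies $a_{i+1} - a_i \leq (a_3-a_1)/2$, which is exactly the claim.

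There is no real obstacle here; the lemma is essentially a one-line consequence of the averaging principle, and it will presumably be used later as a convenient way to pick out a ``small'' consecutive difference among three terms of an increasing sequence (most likely to fit inside a dyadic window in the style of the proof of \Cref{main}).
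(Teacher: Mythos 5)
Your proposal is correct and uses essentially the same argument as the paper: both observe that $(a_2 - a_1) + (a_3 - a_2) = a_3 - a_1$ and conclude by contradiction that if both consecutive differences exceeded $(a_3 - a_1)/2$ their sum would exceed $a_3 - a_1$. Your guess about its role (selecting a small difference to fit a dyadic window, as in the proof of \Cref{stronger}) is also accurate.
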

\begin{proof}
    Simply note that if no such $i$ existed, then $$a_3 - a_1 = (a_3 - a_2) + (a_2 - a_1) > 2\left(\frac{a_3-a_1}{2}\right),$$ which is impossible.
\end{proof}
We now present the proof.

\begin{proof}[Proof of \Cref{stronger}]
Let $\pi$, $\pi'$ be as in the theorem statement. Assume that $1$ appears before $2$ in $\pi$; the other case then follows from \Cref{reverse}. Let $S$ be any set such that $m = |S| \geq 42(\log_2 n) \cdot g(\pi', n)$. We will show that $S$ has a $\pi$-wave. For $n \leq 42$ the statement is vacuously true as $|S| \geq 42$, hence we may assume $n > 42$. Our first goal will be to take a particular subset of $S$ from which we find a suitable $\pi'$-wave. 

Suppose that the elements of $S$ in increasing order are $x_1 < \cdots < x_m$. Partition the elements into $1 + \lfloor \log_2 n \rfloor$ sets as follows: for each $1 \leq j \leq 1 + \lfloor \log_2 n \rfloor$, set  $$T_j = \{x_i: 1 \leq i \leq m-1, x_{i+3} - x_i \in [2^{j-1}, 2^{j})\}.$$

Note that by definition, $$\bigsqcup_{j = 1}^{1 + \lfloor \log_2 n \rfloor} T_j =S.$$ Therefore, by Pigeonhole, there is some $1 \leq s \leq 1 + \lfloor \log_2 n \rfloor$ such that $$|T_s| \geq \frac{42(\log_2 n) \cdot g(\pi', n)}{ 1 + \lfloor \log_2 n \rfloor}\geq 21g(\pi', n).$$ Suppose $T_s = \{x_{i_1}, \ldots, x_{i_t}\}$ where $t \geq 21 g(\pi', n)$ and $i_1 < \cdots < i_t$. Consider the set 
$$T_s' = \left\{\left\lfloor \frac{x_{i_{3j}}}{2^{s-1}} \right\rfloor: 2 \leq j \leq \left\lfloor \frac{t}{3}\right\rfloor\right\}.$$ Note that $$x_{i_{3(j+1)}} - x_{i_{3j}} \geq x_{i_{3j} + 3} - x_{i_{3j}} \geq 2^{s-1},$$ hence $T_s'$ is a set of $\left\lfloor \frac{t}{3} - 1\right\rfloor$ positive integers. In particular,  we have $|T_s'| \geq 6g(\pi', n)$.

Again, partition $T_s'$ into $6$ sets $$T_{s,j} = \{n\in T_s', n \equiv j \pmod 6\}$$ for $1 \leq j \leq 6$. By Pigeonhole again, at least one such set will satisfy $|T_{s,j}| \geq \frac{|T_j'|}{6} \geq g(\pi', n)$.  It follows by the definition of $g(\pi', n)$ that there is some increasing subsequence of elements of $T_{s,j}$ which forms a $\pi'$-wave. Let this sequence be $$\mathcal{S}_0 = \left(\left\lfloor \frac{x_{a_1}}{2^{s-1}} \right\rfloor,\ldots, \left\lfloor \frac{x_{a_{k-1}}}{2^{j-1}} \right\rfloor\right) = (y_1, y_2, \ldots, y_{k-1}).$$ 

\medskip

Now, our first task is accomplished: the sequence $$\mathcal{S}_1 = \left(x_{a_1}, \ldots, x_{a_{k-1}}\right) = (z_1, \ldots, z_{k-1})$$ is a $\pi'$-wave, and moreover, the consecutive differences $d_1(i) = z_{i+1} - z_i$ are all far apart. Indeed, let $d_0(i) = y_{i+1} - y_i$ be the consecutive differences of $\mathcal{S}_0$. Since $\mathcal{S}_0$ is a $\pi'$-wave, we know that if $\pi'(i) > \pi'(j)$, then $d_0(i) > d_0(j)$. Since $x \leq \lfloor x \rfloor < x+1$, we have $$[d_0(i) - 1] \cdot 2^{s-1}  < d_1(i) < [d_0(i) + 1] \cdot 2^{s-1}.$$ Also, by construction, $d_0(i)$ is a multiple of $6$. Therefore, we have $$d_0(i) > 0 \implies d_1(i) > 5 \cdot 2^{s-1}$$ and \begin{align*} d_0(i) > d_0(j) \implies d_1(i) - d_1(j)> 4 \cdot 2^{s-1}.\end{align*} 

\medskip

Now, we construct the desired $\pi$-wave. The idea is to add two elements to $\mathcal{S}_1$ to create the two smallest consecutive differences. Let $\ell = \pi^{-1}(1)$ be the index of $1$ in $\pi$, and $r = \pi^{-1}(2)$ be the index of $2$ in $\pi$. Note that $\ell \leq r-2$. By \Cref{profilepick}, there exists some $c_\ell \in \{1,2\}$ for which $$x_{a_\ell + c_\ell} - x_{a_\ell + c_{\ell}-1} \leq \frac{x_{a_{l} + 3} - x_{a_{l}}}{2} < 2^{s-1},$$ where the final inequality holds as $x_{a_\ell} \in T_s$. Let $u_1 = x_{a_\ell + c_\ell - 1}$ and $u_2 = x_{a_\ell + c_{\ell}}$. 

Next, let $v = x_{a_{r-1} + 3}$. Note that since $x_{a_{r-1}} \in T_s$, we have $ v-z_{r-1} = v - x_{a_{r-1}} \in [2^{s-1}, 2^s)$. Then consider the sequence $$\mathcal{S} = (z_1, \ldots, z_{\ell-1}, u_1, u_2, z_{\ell+1}, \ldots, z_{r-1}, v, z_{r}, \ldots, z_{k-1}).$$ We claim that $\mathcal{S}$ is a $\pi$-wave. 

Intuitively, this is true as $u_2 - u_1$ and $v - z_{r-1}$ are the smallest consecutive differences, and the insertions of $u_1, u_2, v$ and deletion of $z_\ell$ from $\mathcal{S}_1$ do not significantly affect any other consecutive differences. More rigorously, suppose that $\pi(i) > \pi(j)$. Let $\mathcal{S}(i)$ be the $i$-th element of $\mathcal{S}$ and let $d(i) = \mathcal{S}(i+1) - \mathcal{S}(i)$ be the consecutive differences of $\mathcal{S}$. It suffices to show that $d(i) > d(j)$. We then have two cases.

\medskip

\noindent \textbf{Case 1: $i = r$.} Since $\pi(i) > \pi(j)$, we must have $j = \ell$. Then we have $$d(i) = v - z_{r-1} \geq 2^{s-1} > u_2 - u_1 = d(j).$$

\noindent \textbf{Case 2: $i \neq r$.} Since $\pi(i) > \pi(j)$, we must also have $i \neq \ell$. Let $i'$ be such that $i' = i$ if $i \leq \ell-1$, $i' = i-1$ if $\ell + 1 \leq i \leq r-1$, and $i' = i-2$ if $r+1 \leq i \leq k$. This is done so that for $i \neq \ell-1 ,\ell+1, r+1$, we have 
$$ d(i) = z_{i' +1} - z_{i'} = d_1(i');$$ for $i = l-1$, we have 
$$d(i) = u_1 - z_{\ell-1} = d_1(\ell-1) + (u_1 - z_{\ell}) \in \left[d_1(i'), d_1(i') + 2^s\right);$$ for $i = \ell + 1$, we have 
$$d(i) = z_{\ell+1} - u_2 = d_1(\ell) - (u_2 - z_\ell)\in \left(d_1(i') - 2^s, d_1(i')\right];$$ and for $i = r+1$, we have
$$d(i) = z_{r} - v = d_1(r-1) - (v - z_{r-1})\in \left(d_1(i') - 2^s, d_1(i')\right].$$
Hence, we have $$d_1(i') - 2^s < d(i) < d_1(i') + 2^s.$$

If $j \in \{\ell, r\}$, then $d(j) < 2^s$ by the construction of $u_1, u_2, v$. Therefore, we have $$d(i) > d_1(i') - 2^s > 5 \cdot 2^{s-1} - 2^s = 3 \cdot 2^s > d(j).$$
Otherwise, define $j'$ analogously to $i'$. Then $d(j)$ satisfies analogous inequalities to $d(i)$. Therefore, we have $$d(i) - d(j) > d_1(i') - d_1(j') - 2^{s+1} > 4 \cdot 2^{s-1} - 2^{s+1} = 0.$$
Therefore, $d(i) > d(j)$ in both cases. We conclude that $\mathcal{S}$ is a $\pi$-wave. 

Thus, any initial set $S$ satisfying the conditions must contain a $\pi$-wave $\mathcal{S}$. Hence by definition, $g(\pi, n) \leq 30(\log_2 n) \cdot g(\pi', n)$. This completes the proof.
\end{proof}
\begin{remark}\label{3hard}
    Unfortunately, the above proof does not easily extend to yield an analogous statement when removing more than two numbers from the permutation. In particular, suppose $\pi'$ is the result of removing $1$, $2$, and $3$ from $\pi$ and then normalizing. The above proof method then breaks down when attempting to construct the $\pi$-wave $\mathcal{S}$ from the $\pi'$-wave $\mathcal{S}_1$. Indeed, one would need to insert elements $u_1, u_2, u_3 \in S$ into $S_1$ to create the consecutive differences $u_1 - z_{a_1}$, $u_2 - z_{a_2}$, and $u_3 - z_{a_3}$ corresponding to $1$, $2$, and $3$, respectively. However, given no further conditions on the $z_i$, this is not always possible! We are thus unable to prove stronger recursive bounds.
\end{remark}
The complete classification of permutations for which the bound of \Cref{maincor} is tight, given by \Cref{equality}, now follows.

\begin{proof}[Proof of \Cref{equality}]
    The statement (1) follows from \Cref{peakseq}. We will prove the statement (2) for the density problem; the statement for the coloring problem then follows from \Cref{gtoP}. Proceed via induction on the length $k$ of $\pi$. The base cases are $\pi = 132$ and $\pi = 231$. In both cases, let $\pi' = 1$; then by \Cref{stronger} we have $$g(\pi, n) \leq 42(\log_2 n) \cdot g(\pi', n) = O\left(\log n)\right),$$ as desired. Now suppose the statement holds for any permutation of length at most $k-1$. Let $\pi \in S_k$ and suppose $\pi$ has some peak. We have two cases.

\medskip

\noindent \textbf{Case 1: $1$ and $2$ are adjacent.} Let $\pi' \in S_{k-1}$ be the permutation obtained by removing $1$ and then normalizing. Then we claim that $\pi'$ contains a peak. Indeed, let $\ell = \pi(i)$ be a peak in $\pi$. Note that we must have $\ell \geq 3$. If its neighbors $\pi(i-1), \pi(i+1) \neq 1$, then the neighbors of $\ell$ in $\pi'$ remain the same, hence $\ell$ remains a peak. Otherwise, suppose (without loss of generality) that $\pi(i-1) = 1$. Then since $\ell \neq 2$, its neighbors in $\pi'$ are $\pi(i+1)$ and $2$, hence $\ell$ remains a peak. Hence, by \Cref{main} and the inductive hypothesis, we have $$g(\pi, n) \leq 30(\log_2 n) \cdot g(\pi', n) = O\left((\log n)^{k-1}\right).$$

\noindent \textbf{Case 2: $1$ and $2$ are not adjacent.} Let $\pi' \in S_{k-2}$ be the permutation obtained by removing $1$ and $2$ and then normalizing. By \Cref{stronger} and \Cref{maincor}, we have $$g(\pi, n) \leq 42(\log_2 n) \cdot g(\pi', n) = O\left((\log n)^{k-1}\right).$$ This completes the inductive step in both cases and the proof.
\end{proof}

\begin{remark}\label{eqhard}
    Although we have shown a tight bound for a class of permutations, due to \Cref{3hard}, we are unable to achieve tight bounds in all cases. In particular, even if $\pi$ contains many peaks, this property is not easily exploited in general. However, in some cases, we are able to attain tight bounds by finding better constructions, as we will see in the next section.
\end{remark}
\section{Decomposable and Layered Permutations}\label{layered}

We now examine the cases in which \Cref{main} is not tight, proving the tight bound of \Cref{deccor} for layered permutations. First, we use \Cref{stronger} to show the upper bound of \Cref{deccor}.

\begin{lemma}\label{decbound}
    Let $\pi \in S_k$ be a layered permutation. Let $\ell$ be the number of non-final layers of $\pi$ of size at least $2$. Then $$g(\pi, n) = O\left((\log_2 n)^{k-\ell-1}\right).$$
\end{lemma}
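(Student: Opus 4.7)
The plan is to induct on $k$ and apply either Theorem \ref{main} or Theorem \ref{stronger} depending on whether values $1$ and $2$ are adjacent in $\pi$. For a layered permutation, the positions of $1$ and $2$ are easy to locate: value $1$ sits at the start of the final layer, and value $2$ is either the second element of the final layer (if that layer has size $\geq 2$) or the start of the second-to-last layer (if the final layer has size $1$). So $1$ and $2$ are adjacent precisely in cases (A) the final layer has size $\geq 2$, or (B) the final and second-to-last layers both have size $1$; the remaining case is (C) the final layer has size $1$ while the second-to-last has size $\geq 2$. The base case $k=1$ is immediate, as $g(\pi, n) = 1 = O(1)$ and $k - \ell - 1 = 0$ there.

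In Cases A and B I would apply Theorem \ref{main} to get $g(\pi, n) \leq 30(\log_2 n)\, g(\pi', n)$, where $\pi'$ is $\pi$ with value $1$ removed and renormalized. In Case A, $\pi'$ is still layered with the same list of layer sizes except that the final layer shrinks from $s_m$ to $s_m - 1 \geq 1$; the non-final layers are untouched, so $\ell(\pi') = \ell$. In Case B, removing $1$ deletes the size-$1$ final layer entirely, and the old second-to-last layer (which had size $1$) becomes the new final layer of $\pi'$; since it had size $1$ it never contributed to $\ell$ in the first place, so again $\ell(\pi') = \ell$. The inductive hypothesis in both cases gives $g(\pi', n) = O((\log n)^{(k-1) - \ell - 1})$, yielding $g(\pi,n) = O((\log n)^{k-\ell-1})$.

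In Case C I would instead apply Theorem \ref{stronger} to get $g(\pi, n) \leq 42 (\log_2 n)\, g(\pi', n)$, where $\pi'$ is $\pi$ with values $1$ and $2$ removed and renormalized. Removing these deletes the final layer and shrinks the second-to-last layer from size $s_{m-1} \geq 2$ to $s_{m-1} - 1 \geq 1$, which becomes the new final layer of $\pi'$. Thus the old second-to-last layer (non-final, size $\geq 2$, contributing $1$ to $\ell(\pi)$) no longer contributes, so $\ell(\pi') = \ell - 1$ and $k(\pi') = k - 2$. The inductive hypothesis yields $g(\pi', n) = O((\log n)^{(k-2) - (\ell - 1) - 1}) = O((\log n)^{k - \ell - 2})$, and multiplying by $\log n$ gives the desired bound.

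The main obstacle is purely bookkeeping: correctly tracking how the layer structure of $\pi'$ differs from that of $\pi$ and verifying that the exponent $k - \ell - 1$ is preserved. The conceptual point is that Case C is exactly where Theorem \ref{stronger} wins over Theorem \ref{main}: removing two elements instead of one precisely compensates for the loss of a non-final layer of size $\geq 2$, saving one factor of $\log n$ per such layer. This is what improves the generic exponent $k-1$ from Corollary \ref{maincor} to the sharper $k - \ell - 1$.
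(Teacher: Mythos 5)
Your proof is correct and follows essentially the same approach as the paper: your Cases A, B, C correspond exactly to the paper's Cases 1, 2, 3 (split on whether the final layer has size $\geq 2$, and if not, whether the second-to-last layer has size $1$ or $\geq 2$), and you apply Theorem~\ref{main} in the first two and Theorem~\ref{stronger} in the third with the same exponent bookkeeping.
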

   
\begin{proof}
     Induct on $k$. The base cases of $k = 1,2$ are resolved by \Cref{maincor}. Now suppose the statement holds for all permutations of length at most $k-1$. Let $\pi = \pi' \ominus (12\cdots t) \in S_k$. We have three cases.
    \medskip
    
\noindent \textbf{Case 1: $t \geq 2$.} Then let $\pi'' = \pi' \ominus (12\cdots (t-1))$. By \Cref{main} and the inductive hypothesis, $$g(\pi, n) \leq 30(\log_2 n) \cdot g(\pi'', n) = O\left((\log_2 n)^{1 + [(k-1) - l - 1]}\right) = O\left((\log_2 n)^{k-l-1}\right).$$
    
\noindent \textbf{Case 2: $t =1$ and $\pi'$ ends with $1$.} Then the final layer of $\pi'$ has length $1$. Therefore, by \Cref{main} and the inductive hypothesis, $$g(\pi, n) \leq 30(\log_2 n) \cdot g(\pi', n) = O\left((\log_2 n)^{1 + [(k-1) - l - 1]}\right) = O\left((\log_2 n)^{k-l-1}\right).$$ 

\noindent \textbf{Case 3: $t=1$ and $\pi'$ does not end with $1$}. Then the final layer of $\pi'$ has length at least $2$. Let $\pi''$ be the permutation obtained by removing $1$ and $2$ from $\pi'$ and normalizing. By \Cref{stronger} and the inductive hypothesis, $$g(\pi, n) \leq 42(\log_2 n) \cdot g(\pi'', n) = O\left((\log_2 n)^{1 + [(k-2) - (l-1) - 1]})\right) = O\left((\log_2 n)^{k-l-1}\right).$$

    This proves the inductive step in all cases, completing the proof.
\end{proof}

For the lower bound, we will use the following modification of $\pi$-waves. 

\begin{definition}
    Let $\pi \in S_k$. A \emph{weak-difference $\pi$-wave} is an increasing sequence of integers $(x_1, \ldots, x_{k+1})$ such that for all $1 \leq i,j \leq k$, we have $x_{i+1} - x_i \geq x_{j+1} - x_j$ if $\pi(i) > \pi(j)$.
\end{definition} 

Now, let $P_{\mathrm{weak}}(\pi, r)$ be the least $M$ such that any $r$-coloring of $[M]$ contains a weak-difference $\pi$-wave. We also need the following fact.

\begin{remark}\label{h}
    \Cref{ezconst} also holds for weak-difference $\pi$-waves. That is, the statement remains true if one replaces $P$ with $P_\mathrm{weak}$ everywhere. The proof is identical to that of \Cref{ezconst}.
\end{remark}

 We will also use the \emph{direct difference} of permutations, defined as follows.
\begin{definition}
    Let $\pi \in S_k$ and $\pi' \in S_\ell$. Then the \emph{direct difference} $\pi \ominus \pi'$ is defined to be the permutation $\sigma$ such that $\sigma(i) = \pi(i) + \ell$ for $i \leq k$ and $\sigma(i) = \pi'(i-k) $ for $i > k$. 
\end{definition}
We now have the following recursive lower bound on $P_{\mathrm{weak}}(\pi, r)$. 

\begin{lemma}\label{decconst}
    Let $\pi = \pi_L \ominus \pi_R \in S_k$, where $\pi_L \in S_\ell$ and $\pi_R \in S_{k - \ell}$. Suppose that $P_\mathrm{weak}(\pi_L, r) = \Omega\left(r(\log r)^{\ell_0}\right)$ and $P_\mathrm{weak}(\pi_R, r) = \Omega\left(r\log r)^{r_0}\right)$. Then $$P_\mathrm{weak}(\pi, r) = \Omega\left((\log n)^{\ell_0 + r_0}\right).$$ 
\end{lemma}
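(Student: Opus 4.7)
The plan is to combine near-extremal colorings $\mathcal{C}_L : [N_L] \to [r_L]$ and $\mathcal{C}_R : [N_R] \to [r_R]$ avoiding weak-$\pi_L$- and weak-$\pi_R$-waves respectively into a product-type coloring of a large interval that avoids weak-$\pi$-waves, and then balance $r_L, r_R \asymp \sqrt{r}$ to produce the exponent $\ell_0 + r_0$ on $\log r$. The structural fact to exploit is that in any weak-$(\pi_L \ominus \pi_R)$-wave $(x_1, \ldots, x_{k+1})$, the prefix $(x_1, \ldots, x_{\ell+1})$ is a weak-$\pi_L$-wave, the suffix $(x_{\ell+1}, \ldots, x_{k+1})$ is a weak-$\pi_R$-wave, and each of the first $\ell$ differences weakly dominates each of the last $k - \ell$.

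Concretely, I would partition $[2 N_L N_R]$ into $N_L$ consecutive ``slots'' of size $2 N_R$, each consisting of an upper half and a lower half of $N_R$ positions. I would color the upper-half position indexed by $b \in [N_R]$ in slot $a$ by the triple $(\mathcal{C}_L(a), \mathcal{C}_R(b), 0)$, and the corresponding lower-half position by $(\mathcal{C}_L(a), \mathcal{C}_R(b), 1)$, so that upper and lower halves draw from disjoint $r_L r_R$-sized palettes; in total at most $2 r_L r_R$ colors are used. The slot size $2 N_R$ is calibrated so that within-slot differences of positions of the same half lie in $[1, N_R - 1]$, while cross-slot differences with block-jump $u := a_{i+1} - a_i \geq 1$ lie in the disjoint interval $[(2u - 1) N_R + 1, (2u + 1) N_R - 1]$.

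Next I would verify that no monochromatic weak-$\pi$-wave $(x_1, \ldots, x_{k+1})$ exists in the constructed coloring. Disjointness of the two palettes forces all $x_i$ to lie in upper halves or all in lower halves; writing $x_i = (a_i - 1) \cdot 2 N_R + b_i$ with $b_i \in [N_R]$ (shifted for the lower case), $(a_i)$ is weakly increasing with $\mathcal{C}_L(a_i) \equiv c_L$ and $\mathcal{C}_R(b_i) \equiv c_R$. If every one of the last $k - \ell$ differences is within-slot, the last $k - \ell + 1$ elements share a single slot, so $(b_{\ell+1}, \ldots, b_{k+1})$ is a monochromatic weak-$\pi_R$-wave in $\mathcal{C}_R$, a contradiction. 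Otherwise some right difference is cross-slot; the weak-wave inequalities then force every one of the first $\ell$ differences to be cross-slot as well, hence $a_1 < a_2 < \cdots < a_{\ell+1}$ strictly, and the range-disjointness propagates $d_j \geq d_{j'}$ to $u_j \geq u_{j'}$, exhibiting $(a_1, \ldots, a_{\ell+1})$ as a monochromatic weak-$\pi_L$-wave in $\mathcal{C}_L$, again a contradiction.

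Finally, choosing $N_L = P_\mathrm{weak}(\pi_L, r_L) - 1$ and $N_R = P_\mathrm{weak}(\pi_R, r_R) - 1$ and taking $r_L = r_R = \lfloor \sqrt{r / 2} \rfloor$ (so that $2 r_L r_R \leq r$) yields $P_\mathrm{weak}(\pi, r) \geq 2 N_L N_R + 1 = \Omega\!\left(r (\log r)^{\ell_0 + r_0}\right)$, after one notes that $P_\mathrm{weak}$ is monotone in $r$. The main obstacle will be establishing the clean difference-range separation that transfers the weak-wave inequalities from the positions $x_i$ to the slot indices $a_i$; the choice of slot size $2 N_R$ is precisely calibrated for this, and the doubling into upper and lower halves with disjoint palettes is what rules out monochromatic waves hiding in ``gap'' regions that a single-half construction would leave unprotected.
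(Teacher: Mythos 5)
Your proof is correct and follows essentially the same approach as the paper's: form a product coloring from near-extremal $\pi_L$- and $\pi_R$-colorings on blocks with a buffer that cleanly separates within-block from cross-block difference ranges, then split on whether the right-half differences stay within a block or force a jump, transferring the weak-wave order to either the within-block indices or the block indices. The only cosmetic difference is the buffer mechanism --- you halve each slot into two disjoint palettes, while the paper partitions each block into five sub-blocks indexed by a third color component --- and the parameter balancing $r_L \approx r_R \approx \sqrt{r}$ is the same.
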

\begin{proof}
    For convenience, let $\hat{P}_{\mathrm{weak}}(\pi, r) = P_\mathrm{weak}(\pi, r) - 1$. Let $m = \left\lfloor \frac{\sqrt{r}}{10}\right\rfloor$ and set $$m_L = \hat{P}_\mathrm{weak}(\pi_L, m), \hspace{0.2cm} m_R = \hat{P}_\mathrm{weak}(\pi_R, m).$$ Let $C_L: [m_L] \rightarrow [m]$ be an $m$-coloring of $[m_L]$ without any monochromatic $\pi_L$-wave. Similarly, let $C_R: [m_R] \rightarrow [m]$ be an $m$-coloring of $[m_R]$ without any monochromatic $\pi_R$-wave. We will combine these colorings to form a $5m^2$-coloring of $[m_L\cdot m_R]$.  
    
    Indeed, let $S = [m] \times [m] \times [5]$. Then let $\mathcal{C}: [m_L \cdot m_R] \rightarrow S$ be the coloring defined as follows. For every $x \in [m_L \cdot m_R]$, write $$x = m_R(a-1) + \frac{m_R}{5}(b-1) + c,$$ where $a \in [m_L]$, $b \in [5]$, and $c \in [\frac{m_R}{5}]$. Then let $$\mathcal{C}(x) = \left(\mathcal{C}_L(a), \mathcal{C}_R(c), b\right).$$ We claim that $\mathcal{C}$ contains no monochromatic weak-difference $\pi$-wave. 
    
    Indeed, suppose otherwise that $$(x_1, \ldots, x_{k+1}) = \left(m_R(a_1-1) + \frac{m_R}{5}b_1 + c_1, \ldots, m_R(a_{k+1}-1) + \frac{m_R}{5}b_{k+1} + c_{k+1}\right)$$ is a monochromatic weak-difference $\pi$-wave. Then, all the $b_i$ must be equal; let $b_i = b$. Furthermore, let $\pi_L^{-1}(1) = t$. We now have two cases.
    
    %
\medskip

\noindent \textbf{Case 1: $a_{t+1} = a_{t}$.} Then $x_{t+1} - x_{t} < \frac{m_R}{5}$. It follows that $x_{i+1} - x_i < \frac{m_R}{5}$ for all $i \geq \ell+1$. Therefore, by construction we know that $a_{i+1} = a_i$ for all $i \geq \ell+1$. But then $(c_{\ell+1}, \ldots, c_{k+1})$ is a weak-difference $\pi_R$-wave, a contradiction.

\medskip

\noindent \textbf{Case 2: $a_{t+1} > a_t$.} Then we know that $x_{t+1} - x_{t} >\frac{4m_R}{5}$. Therefore, we must have $a_{i+1} > a_i$ for all $i \leq \ell$. However, since $c_i < \frac{m_R}{5}$, we know that for any $i,j \leq \ell$ such that $x_{i+1} - x_i \geq x_{j+1} - x_j$, we must also have $a_{i+1} - a_i \geq a_{j+1} - a_j$. Therefore, $(a_1, \ldots, a_{\ell+1})$ is a weak-difference $\pi_L$-wave, a contradiction.

Therefore, $\mathcal{C}$ must in fact contain no monochromatic weak-difference $\pi$-wave. It follows from the choice of $m, m_L, m_R$ that $$P_\mathrm{weak}(\pi, r) >\hat{P}_\mathrm{weak}(\pi, r) \geq m_L \cdot m_R = \Omega\left(r (\log_2 r)^{l_0 + r_0}\right),$$ as desired.
\end{proof}
    
This allows us to prove the lower bound in \Cref{deccor}.

\begin{lemma}\label{decconstcor}
    Let $\pi \in S_k$ be a layered permutation. Let $\ell$ be the number of non-final layers of $\pi$ of size at least $2$. Then $$P(\pi, r) \geq P_\mathrm{weak}(\pi, r) = \Omega\left(r(\log_2 r)^{k-\ell-1}\right).$$
\end{lemma}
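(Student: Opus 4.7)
The first inequality $P(\pi, r) \geq P_\mathrm{weak}(\pi, r)$ is immediate, since every $\pi$-wave is also a weak-difference $\pi$-wave, so any coloring producing the former in all $r$-colorings produces the latter too. The main content is the lower bound on $P_\mathrm{weak}$. I would induct on the number of layers $m$ of $\pi$. Write $\pi = L_1 \ominus \pi''$ where $L_1 = (12\cdots s_1)$ is the top layer and $\pi''$ consists of the remaining $m-1$ layers, with $\ell''$ non-final layers of size $\geq 2$. The base case $\pi = (1)$ gives $P_\mathrm{weak}(\pi,r) = r+1 = \Omega(r)$, matching $k - \ell - 1 = 0$.

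For the inductive step, I would split according to the size of $L_1$. In Case A ($s_1 = 1$), the permutation $\pi$ begins with $k$ and $\ell = \ell''$, so applying \Cref{h} (the weak analog of \Cref{ezconst}) to the inductive bound $P_\mathrm{weak}(\pi'', r) = \Omega(r(\log r)^{(k-1)-\ell-1})$ gives one extra logarithmic factor, yielding $\Omega(r(\log r)^{k-\ell-1})$. In Case B ($s_1 \geq 2$), we have $\ell = \ell'' + 1$, and I would apply \Cref{decconst} with $\pi_L = L_1$ and $\pi_R = \pi''$. The inductive hypothesis gives $P_\mathrm{weak}(\pi'',r) = \Omega(r(\log r)^{(k-s_1)-\ell''-1}) = \Omega(r(\log r)^{k-s_1-\ell})$, so $r_0 = k - s_1 - \ell$.

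To handle $\pi_L = (12\cdots s_1)$, I would first observe that the argument for \Cref{reverse} goes through verbatim for weak-difference waves, giving $P_\mathrm{weak}(\pi_L, r) = P_\mathrm{weak}((s_1, s_1{-}1,\ldots,1), r)$. Then, starting from the trivial $P_\mathrm{weak}((1),r) = \Omega(r)$ and iterating \Cref{h} exactly $s_1 - 1$ times (each step prepending the maximum entry, as in Case A), I obtain $P_\mathrm{weak}((s_1\cdots1),r) = \Omega(r(\log r)^{s_1-1})$. This self-contained derivation avoids relying on whether \Cref{BEJ} was originally stated for weak or strict waves. Hence $\ell_0 = s_1 - 1$, and \Cref{decconst} combines the two bounds into $\Omega(r(\log r)^{(s_1-1)+(k-s_1-\ell)}) = \Omega(r(\log r)^{k-\ell-1})$, closing Case B.

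The main obstacle is purely bookkeeping: ensuring that the exponent arithmetic works out in both cases under the correct adjustment of $\ell$ (a size-1 non-final layer is invisible to $\ell$, so Case A must contribute one logarithmic factor via \Cref{h}, while a size-$s_1 \geq 2$ non-final layer contributes $s_1 - 1$ factors via \Cref{decconst} while also increasing $\ell$ by one). Once this accounting is unwound, both branches of the induction fall into line and no further combinatorial input is needed; the only analytical tools are \Cref{decconst}, \Cref{h}, and the weak-difference analog of \Cref{reverse}, all of which are either proved or explicitly noted in the excerpt.
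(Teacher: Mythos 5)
Your proof is correct and follows essentially the same path as the paper: induct on the number of layers, peel off the top layer $L_1 \ominus \pi''$, and split into the cases $|L_1| = 1$ (use \Cref{h}) and $|L_1| \geq 2$ (use \Cref{decconst}), with the exponent bookkeeping coming out identically. If anything, you are more careful than the paper at two points where it is slightly loose: you make the weak-difference analogue of \Cref{reverse} explicit (needed because \Cref{h} requires the permutation to begin with its maximum, whereas the first layer is ascending), and your base case $\pi=(1)$ is cleaner than the paper's, which attributes the one-layer base case to \Cref{maincor} — an upper-bound statement — whereas what is really used is the same iterated application of \Cref{h} plus reversal that you spell out.
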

\begin{proof}
    It is clear that $P(\pi, r) \geq P_\mathrm{weak}(\pi, r)$ as any $\pi$-wave is also a weak-difference $\pi$-wave. So it suffices to prove the latter inequality. We induct on the number of layers of $\pi$. The base case of a one-layer permutation holds by \Cref{maincor}. Now suppose the statement holds for all permutations with at most $t-1$ layers. Let $\pi$ have $t$ layers and suppose $\pi = \pi_1 \ominus  \pi'$, where $\pi_1 \in S_{k_1}$ is the identity permutation. By repeatedly applying \Cref{h}, we have $P_\mathrm{weak}(\pi_1, n) = \Omega\left((\log_2 n)^{k_1 - 1}\right)$. We have two cases.
    \medskip
    
\noindent \textbf{Case 1: $k_1 \geq 2$.} Then $\pi'$ has $\ell-1$ non-final layers of length at least $2$. Then by \Cref{decconst} and the inductive hypothesis, we have $$P_\mathrm{weak}(\pi, n) = \Omega\left((\log_2 n)^{(k_1 - 1) + [(k-k_1) - (l-1) - 1]}\right) = \Omega\left((\log_2 n)^{k - l-1}\right).$$ 
    
\noindent \textbf{Case 2: $k_1 = 1$.} Then $\pi'$ has $\ell$ non-final layers of length at least $2$. Then by \Cref{h}, $$P_\mathrm{weak}(\pi, n) = \Omega\left((\log_2 n)^{ 1 + [(k-1) - l - 1]}\right) = \Omega\left(\log_2 n)^{k-l-1}\right).$$ This completes the inductive step in both cases, as desired. 
\end{proof}

\Cref{deccor} follows immediately.
\begin{proof}[Proof of \Cref{deccor}]
   The upper bound follows from \Cref{decbound} and \Cref{gtoP}, and the lower bound follows from \Cref{decconstcor} and \Cref{gtoP}.
\end{proof}

\section{Relating the Density and Coloring Problems: \Cref{gtoP}}\label{cors}
Lastly, we prove \Cref{gtoP}, which we have used throughout to obtain bounds on $P(\pi, r)$ from bounds on $g(\pi, n)$ and vice versa. 

\begin{proof}[Proof of \Cref{gtoP}]
We first prove item (1). Suppose that $g(\pi, n) \leq C(\log n)^t$ for some $C > 0$. Let $M_r = \left\lceil 2^{t+1}C\cdot r(\log r)^t \right\rceil$ and consider any $r$-coloring of $[M_r]$. By the Pigeonhole Principle, some color is used at least $2^{t+1}C\cdot (\log r)^t$ times. Then note that $$2^{t+1}C \cdot (\log r)^t> 2C\left[\log r + \log \left(t\log r\right)\right]^t=  2C\left[\log \left(r(\log r)^t\right)\right]^t> C(\log M_r)^t$$ for sufficiently large $r$. Hence, by the assumption there must exist some $\pi$-wave of this color, implying that $P(\pi, M_r) \leq 2^{t+1}\cdot r(\log r)^t$ for sufficiently large $r$. Since $P(\pi, r)$ is increasing in $r$, it follows that $P(\pi, r) = O\left(r(\log r)^t\right)$, as desired.

We now prove item (2). Suppose that $P(\pi, r) \geq Cr(\log r)^t$ for some $C > 0$. Let $M_r = \lfloor Cr(\log r)^t\rfloor$ and consider an $r$-coloring of $[M_r]$ which does not contain any monochromatic $\pi$-wave. By Pigeonhole, some color $c$ is used at least $\frac{M_r}{r}$ times. Let $S$ be the subset of $[M_r]$ with color $c$. Then $S$ is $\pi$-wave-free and $|S| \geq C(\log r)^t - 1.$ Hence $g(\pi, M_r) \geq C(\log r)^t - 1$. Since $g(\pi, n)$ is increasing in $n$, it follows that $g(\pi, n) = \Omega\left((\log r)^t\right)$, as desired.
\end{proof}

\section{Conclusion and Further Directions}\label{concl}
We conclude with a few notes on further directions. \Cref{main} and \Cref{maincor} show that the asymptotic for $g(\pi, n)$ in general is at most polylogarithmic. We therefore ask for the exact asymptotic. 

\begin{question}
    What is the exact asymptotic behavior of $g(\pi, n)$ for fixed $\pi$ and large $n$? In particular, is it the case that for each $\pi$, there is some $e_\pi$ such that $g(\pi, n) = \Theta\left((\log n)^{e_\pi}\right)$?
\end{question}

We suspect that such $e_\pi$ should exist, but our methods are thus far unable to immediately yield the exact value of the exponent for all $\pi$. Lastly, note that the upper bounds given in \Cref{maincor} and \Cref{deccor} are both of the form $g(\pi, n) =O\left((\log n)^{e_\pi}\right)$, where the exponent $e_\pi \geq \frac{k}{3}$ is at least linear in the length $k$ of $\pi$. We therefore end by asking whether it is true that the exponent can be sublinear. 

\begin{question}
    Is it true that for any constant $C$, there is some $\pi \in S_k$, $k > 1$ for which $g(\pi, n) = O\left((\log n)^\frac{k}{C}\right)$? 
\end{question}

Again, we suspect that this is true. In particular, though the proof of \Cref{stronger} does not appear to extend to stronger recursive bounds when removing more than two elements of $\pi$, it suggests that if $\pi$ has very few pairs of elements which are adjacent and consecutive, such a bound may be attainable.

\end{document}